\documentclass[reqno]{amsproc}
\usepackage{amssymb}
\usepackage{amsmath}
\usepackage{mathdots}
\usepackage{amsbsy}
\usepackage{amscd}
\usepackage{amsthm,longtable}

\usepackage{url}
\usepackage{xcolor}
\usepackage{amsmath,amssymb,amsthm,amsfonts,longtable}
\usepackage[english]{babel}
\usepackage{tikz-cd}
\usetikzlibrary{cd}
\usetikzlibrary{decorations.markings}
\tikzset{negated/.style={
		decoration={markings,
			mark= at position 0.5 with {
				\node[transform shape] (tempnode) {$\times$};
			}
		},
		postaction={decorate}
	}
}
\usepackage{array}
\usepackage[colorlinks,citecolor=red,urlcolor=blue,bookmarks=false,hypertexnames=true]{hyperref}

\newtheorem{definition}{Definition}[section]
\newtheorem{lemma}[definition]{Lemma}

\newtheorem{theorem}[definition]{Theorem}
\newtheorem{corollary}[definition]{Corollary}
\newtheorem{remark}[definition]{Remark}

\newcommand{\ind}{\mathord\uparrow} 
\newcommand{\Irr}{\textnormal{Irr}}
\newcommand{\cd}{\textnormal{cd}}
\newcommand{\nl}{\textnormal{nl}}
\newcommand{\lin}{\textnormal{lin}}
\newcommand{\Core}{\textnormal{Core}}

\newcommand{\cl}{\textnormal{cl}}
\newcommand{\cod}{\textnormal{cod}}

\title[Character Codegree]
{Codegrees of Irreducible Characters of VZ and Camina $p$-Groups} 

\author{Ayush Udeep}
\address{SRM Institute of Science and Technology Kattankulathur, Chennai-603203, Tamil Nadu, India}
\email{udeepayush@gmail.com}
\subjclass[2020]{Primary 20D15; secondary 20C15, 20B05}
\keywords{character codegree, VZ $p$-groups, Camina $p$-groups, groups of order less than $p^6$}

\begin{document}
	\maketitle
	\begin{abstract}
		The character codegree of an irreducible character  of a finite group $G$ is given by the index of its kernel in $G$ upon the character degree. We compute the codegrees of irreducible characters of VZ and Camina $p$-groups, and also obtain the character codegrees set of $p$-groups of order $\leq p^5$, where $p$ is an odd prime.
	\end{abstract}
	
	\section{Introduction}
	In this article, $G$ denotes a finite $p$-group, where $p$ denotes a prime. The character codegree of an irreducible character $\chi$ of $G$ is the index of the kernel of $\chi$ in $G$ upon the degree of $\chi$. We denote the character codegree of $\chi$ by $\cod(\chi)$, and the set of character codegrees of all irreducible characters of $G$, all linear characters of $G$ and all nonlinear irreducible characters of $G$ by $\cod(G)$, $\cod_{\lin}(G)$ and $\cod_{\nl}(G)$, respectively.
	Qian et al. \cite{QWW2007} initiated the study of the irreducible character codegrees of $G$. In recent past, various researchers have obtained interesting structural results for various classes of non-abelian $p$-groups by studying their character codegree sets. As it turns out, certain versions of character codegrees of known results for character degrees hold for some classes of $p$-groups. For example, Du and Lewis \cite{DL2016} proved that the nilpotence class of $G$ is bounded in terms of the largest codegree for an irreducible character of $G$. They also proved that a $p$-group with exactly two codegrees is elementary abelian,
	and a $p$-group with exactly three codegrees has nilpotence class at most 2. In \cite{CL2020}, Croome and Lewis studied p-groups with exactly four codegrees. In \cite{CL2020 maximal}, Croome and Lewis proved that if a non-abelian group $G$ has order $p^n$ and $\cod(G)$ contains
	every power of $p$ up to $p^{n-1}$, then the nilpotence class of G can either be 2 or $n-1$. Moret\'{o} \cite{M2022} proved that the nilpotence class of a p-group is bounded in terms of the number of character degrees and the number of character codegrees.
		On the other hand, there are certain properties of character degrees that character codegrees do not respect. For example, while isoclinism preserves the set of irreducible character degrees of $G$, denoted $\cd(G)$, it does not preserve $\cod(G)$. For example, consider groups $\phi_{2}(211)a$ and $\phi_{2}(211)b$ of order $p^4$ $(p>3)$ belonging to the isoclinic family $\Phi_2$ (see \cite{J1980} for the classification of $p$-groups of order $p^4~(p>3)$). We have $\cod(\phi_{2}(211)a) = \{1, p, p^2\}$ whereas $\cod(\phi_{2}(211)b) = \{1, p, p^3\}$ (see Table \ref{t:p34}). 
		
		In a different direction, Prajapati and Udeep \cite{PU2024} established a relation between the minimal faithful permutation representation degree of a $p$-group of odd order and its character codegrees. All these instances motivate us to explore other relevant properties of the character codegrees for various classes of finite $p$-groups.
		In this article, we obtain the character codegrees for certain interesting classes of finite $p$-groups. In Section \ref{sec:abelian and VZ}, we compute this quantity for the irreducible characters of VZ and Camina $p$-groups. We begin by proving that for any $p$-group $G$, $\cod_{\lin}(G) = \{ p^{i} ~:~ i= 0, 1, \ldots,\log_{p}(\exp(G/G')) \}$ (see Corollary \ref{cor:linearcod}). A group is called a VZ-group if all its nonlinear irreducible complex characters vanish off the center. VZ-groups were studied in \cite{FAM2001, L2009 vanishing, L2009}.  We obtain an algorithm for the computation of $\cod(G)$ when $G$ is any general VZ-group (see Remark \ref{remark:VZgroups}). We observe that the problem of computation of $\cod(G)$, in this case, reduces to finding the exponent of $G/G'$ and obtaining the set
	\[ \left\{  \left| N \right| ~:~ N \lneq Z(G) \text{ such that } G' \nleq N \text{ and } \frac{Z(G)}{N} \text{ is cyclic} \right\}. \]
	\noindent We also compute the exact character codegrees for certain sub-classes of VZ $p$-groups. In fact, we prove the following.
	\begin{theorem} \label{thm: VZ p-group}
		Let $G$ be a VZ $p$-group. 
		\begin{enumerate}
			\item [(i)] If $Z(G)$ is cyclic, then $\cod(G) = \{ p^i, (|G||Z(G)|)^{1/2}~:~ i = 0, 1, \ldots, \log_{p}(\exp(G/G')) \}$.
			\item [(ii)] If $Z(G)$ is elementary abelian, then $\cod(G) = \{ p^i, p\cdot|G/Z(G)|^{1/2}~:~ i = 0, 1, \ldots, \log_{p}(\exp(G/G')) \}$.
			\item [(iii)] If $d(G') = d(Z(G))$, then \[ \cod(G) = \{ p^i, p^{l_j}\cdot|G/Z(G)|^{1/2}~:~ i = 0, 1, \ldots, \log_{p}(\exp(G/G')), j = 1,2,\ldots, r \}, \] where $Z(G) \cong C_{p^{l_1}} \times C_{p^{l_2}} \times \cdots \times C_{p^{l_r}} ~ (l_1\geq l_2\geq \cdots \geq l_r)$.
			\item [(iv)] If $d(G') < d(Z(G)) = 2$, then either \[ \cod(G) = \{ p^i, p^{j}\cdot|G/Z(G)|^{1/2}~:~ i = 0, 1, \ldots, \log_{p}(\exp(G/G')), j = l_2, l_2+1,\ldots, l_1 \}, \] or
			\[ \cod(G) = \{ p^i, \exp(Z(G))\cdot|G/Z(G)|^{1/2}~:~ i = 0, 1, \ldots, \log_{p}(\exp(G/G')) \} \]
			where $Z(G) \cong C_{p^{l_1}} \times C_{p^{l_2}}  ~ (l_1\geq l_2)$.
		\end{enumerate}
	\end{theorem}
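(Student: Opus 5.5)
The plan is to split $\cod(G)$ into linear and nonlinear parts, $\cod(G)=\cod_{\lin}(G)\cup\cod_{\nl}(G)$. By Corollary \ref{cor:linearcod}, $\cod_{\lin}(G)=\{p^i : 0\le i\le \log_p(\exp(G/G'))\}$, which gives the first family in every case. All the work is therefore in $\cod_{\nl}(G)$.

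For this I will use the standard description of VZ-groups (Fern\'andez-Alcober--Moret\'o, Lewis). In a VZ-group every $\chi\in\Irr(G)$ nonlinear satisfies $\chi(1)^2=|G:Z(G)|$ and $\chi$ vanishes off $Z(G)$. Moreover, $\chi$ is uniquely determined by its central character $\lambda\in\Irr(Z(G))$, where $\chi_{Z(G)}=\chi(1)\lambda$. The map $\chi\mapsto\lambda$ is a bijection onto the set of $\lambda\in\Irr(Z(G))$ with $G'\nleq\ker\lambda$. Since $\chi$ vanishes off $Z(G)$, we get $\ker\chi=\ker\lambda\le Z(G)$. Hence
\[\cod(\chi)=\frac{|G:\ker\lambda|}{|G:Z(G)|^{1/2}}=|G:Z(G)|^{1/2}\,|Z(G):\ker\lambda|.\]
Thus $\cod_{\nl}(G)=|G/Z(G)|^{1/2}\cdot\{|Z(G)/N| : N=\ker\lambda,\ G'\nleq N\}$. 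The kernels of characters of the abelian group $Z(G)$ are exactly the subgroups $N$ with $Z(G)/N$ cyclic. This is the general algorithm of Remark \ref{remark:VZgroups}.

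The cases then reduce to finite abelian group computations, using $G'\le Z(G)$ (VZ-groups have class $2$).

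\textbf{Case (i).} If $Z(G)$ is cyclic, then every $N$ with $G'\nleq N$ misses the unique subgroup of order $p$, since that subgroup lies in $G'\neq 1$. So $N=1$ and the codegree is $|G:Z(G)|^{1/2}|Z(G)|=(|G||Z(G)|)^{1/2}$.

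\textbf{Case (ii).} If $Z(G)$ is elementary abelian, then every cyclic quotient $Z(G)/N$ with $N\neq Z(G)$ has order $p$. Some hyperplane avoids containing $G'$, so the only nonlinear codegree is $p|G/Z(G)|^{1/2}$.

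\textbf{Case (iii).} If $d(G')=d(Z(G))$, then $\Omega_1(G')=\Omega_1(Z(G))$. So $G'\nleq N$ holds whenever $N$ misses some element of order $p$ of $Z(G)$, that is, whenever $\Omega_1(Z(G))\nleq N$. I need to determine the possible orders of cyclic quotients $Z(G)/N$ with $\Omega_1(Z)\nleq N$. A cyclic quotient of order $p^k$ corresponds to $\lambda$ of order $p^k$, and $\ker\lambda\supseteq \Omega_1(Z)$ iff $\lambda$ is a $p$-th power in the dual, i.e.\ $\lambda\in p\widehat{Z}$. One checks that $\lambda\notin p\widehat{Z}$ can have order exactly $p^{l_j}$: take a generator of one $C_{p^{l_j}}$ dual factor. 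Conversely, the order of $\lambda\notin p\widehat Z$ is the maximum order among the components with a nonzero non-$p$-divisible coordinate... Here one must be careful, because $\lambda=(\mu_1,\dots,\mu_r)$ has order $\max_j \operatorname{ord}(\mu_j)$, while non-divisibility needs some $\mu_j$ of full order $p^{l_j}$. The order could then exceed $l_j$ through another coordinate, but that coordinate has order $p^{l_i}$ at most, and it is not forced to equal some $l_i$. This is the main obstacle: I have to verify whether intermediate orders can occur. Take $\lambda$ with $\mu_j$ a generator of $C_{p^{l_j}}$ and $\mu_i$ of order $p^m$ with $l_j<m<l_i$. This has order $p^m$ and is not in $p\widehat Z$, yet $p^m$ is not among the listed $p^{l_j}$. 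So I would re-examine the condition. The correct condition is $G'\nleq\ker\lambda$, not $\Omega_1(Z)\nleq\ker\lambda$, and I would use the hypothesis more strongly. Likely the paper uses $d(G')=d(Z)$ together with a structural fact that $G'$ and $Z(G)$ share enough, and I would test on small examples before finalizing. My first attempt is to show $\Omega_1$-containment controls everything and compute via the dual group.

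\textbf{Case (iv).} If $d(G')=1<d(Z)=2$, then $G'$ is cyclic, and its socle is one subgroup $\langle z\rangle$ of order $p$ in $\Omega_1(Z)\cong C_p^2$. The condition becomes $z\notin\ker\lambda$, i.e.\ $\lambda(z)\neq1$. There are two subcases. If $z$ lies in the image of the larger factor's socle in a suitable decomposition, characters with $\lambda(z)\ne1$ can have orders $p^j$ for $l_2\le j\le l_1$, giving the first family. If $z$ is forced to meet only elements pairing with the top-order part, $\lambda(z)\ne1$ forces $\operatorname{ord}\lambda=p^{l_1}=\exp Z(G)$, giving the second family. This dichotomy would be established by choosing a basis of $Z(G)$ adapted to $z$, distinguishing whether $z\in p^{l_1-1}Z(G)$ or not.
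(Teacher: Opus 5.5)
Your reduction $\cod_{\nl}(G)=|G/Z(G)|^{1/2}\cdot\{|Z(G)/\ker\lambda| : \lambda\in\Irr(Z(G)),\ G'\nleq\ker\lambda\}$ and your treatment of (i) and (ii) are exactly the paper's argument (Remark~\ref{remark:VZgroups} followed by the case analysis).

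\textbf{Part (iii).} The genuine gap is here. You leave (iii) open, and the way out you propose cannot work. Since $G'$ is elementary abelian and lies in $Z(G)$, the hypothesis $d(G')=d(Z(G))$ forces $G'=\Omega_1(Z(G))$. So ``$G'\nleq\ker\lambda$'' is literally the same condition as ``$\Omega_1(Z(G))\nleq\ker\lambda$'', and there is no extra strength in the hypothesis to exploit. The character you wrote down (a generator in one coordinate, an element of order $p^m$ with $l_j<m<l_i$ in another) therefore really does give a nonlinear $\chi$ with $\cod(\chi)=p^m|G/Z(G)|^{1/2}$. So part (iii) is false as stated; your obstacle is a counterexample, not an error on your side.

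A concrete instance runs as follows.
\begin{itemize}
\item Let $H$ be the Heisenberg group over $\mathbb{F}_{p^2}$. It is VZ with $Z(H)=H'=\langle z_1,z_2\rangle\cong C_p^2$.
\item Let $G$ be the central product of $H$ with $\langle c\rangle\cong C_{p^3}$, identifying $z_1$ with $c^{p^2}$.
\item $G$ is VZ of order $p^8$: its nonlinear characters come from $\chi\times\gamma$ with $\chi\in\nl(H)$, so they vanish off $Z(G)$.
\item $Z(G)=\langle c\rangle\times\langle z_2\rangle\cong C_{p^3}\times C_p$, $G'=\Omega_1(Z(G))$, $|G/Z(G)|^{1/2}=p^2$, and $G/G'\cong C_p^4\times C_{p^2}$.
\item Hence (iii) predicts $\cod(G)=\{1,p,p^2,p^3,p^5\}$.
\item Now take $\lambda(c)=\omega$ of order $p^2$ and $\lambda(z_2)=\omega^p$. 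Then $\ker\lambda=\langle c^{-p}z_2\rangle$ has order $p^2$ and does not contain $z_2\in G'$. This gives $p^2\cdot p^2=p^4\in\cod(G)$, which (iii) omits.
\end{itemize}
The paper's proof breaks at the claim that $d(N)=r-1$ forces $|N|=p^{\sum_{k\neq j}l_k}$; this $N$ refutes it. Completed, your dual-group computation gives the correct answer
\[\cod_{\nl}(G)=\{p^m|G/Z(G)|^{1/2} : l_r\le m\le l_1\}.\]
Some coordinate of $\lambda$ must be a unit, so $\operatorname{ord}\lambda\ge p^{l_r}$. Every $m$ in between is attained by a unit in the last coordinate together with an element of order $p^m$ in the first.

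\textbf{Part (iv).} Your final criterion ($z\in p^{l_1-1}Z(G)$ or not) is the right one, but the first sentence of your sketch has the two subcases reversed.
\begin{itemize}
\item If $z$ generates the socle of a $C_{p^{l_1}}$-factor $\langle\alpha\rangle$ (for $l_1>l_2$ this is exactly $z\in p^{l_1-1}Z(G)$), then $\lambda(z)\neq 1$ forces $\lambda(\alpha)$ to have order $p^{l_1}$. Only $\exp(Z(G))|G/Z(G)|^{1/2}$ occurs; this is the paper's Case II.
\item If $z=\alpha^{ap^{l_1-1}}\beta^{bp^{l_2-1}}$ with $p\nmid b$, replace $\beta$ by $\beta'=\alpha^{ap^{l_1-l_2}}\beta^{b}$. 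This still complements $\langle\alpha\rangle$ and satisfies $\beta'^{p^{l_2-1}}=z$. Then $\lambda(z)\neq1$ iff $\lambda(\beta')$ has order $p^{l_2}$, while $\lambda(\alpha)$ is arbitrary. This yields exactly the orders $p^j$ with $l_2\le j\le l_1$, the paper's Case I, which the paper simply assumes via ``we can choose generators.''
\end{itemize}
With this basis change and both bounds written out, your treatment of (iv) is complete.
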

	As a corollary of Theorem \ref{thm: VZ p-group}, we prove that
	for an extraspecial $p$-group $G$, $\cod(G) = \{ 1, p, \left( p\cdot |G| \right)^{1/2} \}$ (see Corollary \ref{cor:extraspecial p-group}). We also obtain the character codegrees set for Camina $p$-groups.  A $p$-group $G$ is called a Camina $p$-group if the conjugacy class of $g$ is $gG{}'$ for all $g\in G\setminus G{}'$ \cite{DS1996, L2014}. Dark and Scoppola \cite{DS1996} proved that Camina $p$-groups are nilpotent of class at most $3$. For any Camina $p$-group $G$, we prove that if the nilpotency class of $G$ is 2, then $\cod(G) = \{ 1, p, p\cdot|G/Z(G)|^{1/2} \}$, otherwise, $\cod(G) = \left\{ 1, p, p \cdot |G/G'|^{1/2}, p \cdot |G/Z(G)|^{1/2}  \right\}$ (see Theorem \ref{thm:CodG Camina pgroup}). It is interesting to note that the extraspecial $p$-groups and Camina $p$-groups of class 2 form classes of $p$-groups with exactly three character codegrees, while Camina $p$-groups of class 3 form a class of $p$-groups with exactly four character codegrees.
	
	In Section \ref{sec:groups less than p6}, we obtain the character codegrees sets for all $p$-groups of order $\leq p^5$, where $p$ is an odd prime. The $p$-groups of order $\leq p^5$, where $p\geq 3$, are classified in certain isoclinic families, and listed in \cite{J1980}.
	Since isoclinism does not preserve $\cod(G)$, the task of computing the character codegrees set of $p$-groups collectively becomes a little difficult. However, we do obtain general results for certain classes of groups of order $p^5$ $(p>3)$ in Lemma \ref{lemma:p5 cl3 cdG1,p} and \ref{lemma:p5 cl3 cdG1,p,p^2}. By utilizing them along with Theorem \ref{thm: VZ p-group} and Corollary \ref{cor:extraspecial p-group}, we compute the character codegrees set for all $p$-groups of order less than $p^6$ $(p>3)$ in Section \ref{sec:groups less than p6}, and list them in Table \ref{t:VZ groups phi2 phi5} and \ref{t:non VZ groups}. We also construct {\sc Magma} \cite{MAGMA} code {\tt CharacterCodegree} to obtain $\cod(G)$ for a given group $G$. We utilize it to verify the data of Tables \ref{t:p34}, \ref{t:VZ groups phi2 phi5} and \ref{t:non VZ groups} for small primes $\geq 5$, and to compute $\cod(G)$ when $G$ is a non-abelian group of order $3^n$ for $n=3,4,5$.
	We summarize the results of Section \ref{sec:groups less than p6} in Theorem \ref{thm:codG total p^5}.
	
	Before we start, here we mention a list of notations (other than those already mentioned above) that we use throughout the article. For a finite group $G$, \\
	
	\begin{tabular}{cl}
		$G'$ & the commutator subgroup of $G$\\
		$\cl(G)$ & the nilpotency class of $G$\\
		$\exp(G)$ & the exponent of $G$\\
		$\Core_{G}(H)$ & the core of $H$ in $G$, for $H\leq G$\\
		$\Irr(G)$ & the set of irreducible complex characters of $G$\\
		$\lin(G)$ & the set of linear characters of $G$\\
		$\nl(G)$ & the set of non-linear irreducible characters of $G$\\
		$\cd(G)$ & the character degree set of $G$, i.e., $\{ \chi(1) ~|~ \chi \in \Irr(G) \}$\\
		$\Irr(G|N)$ & $\Irr(G) \setminus \Irr(G/N)$, where $N$ is a normal subgroup of $G$\\
		$\nu$ & the smallest positive integer which is a quadratic non-residue modulo $p$
	\end{tabular}
	
	
	\section{Character Codegrees of VZ $p$-groups and Camina $p$-groups} \label{sec:abelian and VZ}
	
	We begin with the following lemma.
	
	\begin{lemma} \label{lemma:normalsubgroupabelian}
		Let $A$ be an abelian $p$-group. If $M$ is a subgroup of $A$ with $A/M$ cyclic, then $|M| \geq |A|/\exp(A)$. Further, for each $b$, where $0\leq b \leq \log_{p}(\exp(A))$, there exists a subgroup $N$ of $A$ such that $A/N \cong C_{p^b}$.
	\end{lemma}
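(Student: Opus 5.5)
The plan has two parts: a counting argument for the inequality, and a direct construction from a decomposition of $A$ for the existence statement.

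For the first assertion, suppose $A/M$ is cyclic. Then $A/M$ is generated by a single coset $aM$, so $|A/M|$ equals the order of $aM$ in $A/M$. This order divides the order of $a$ in $A$, which in turn divides $\exp(A)$. Hence $|A|/|M| = |A/M| \leq \exp(A)$, which rearranges to $|M| \geq |A|/\exp(A)$. Nothing beyond Lagrange's theorem and the definition of exponent is needed.

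For the second assertion, I would use the structure theorem for finite abelian $p$-groups and write
\[ A \cong C_{p^{l_1}} \times C_{p^{l_2}} \times \cdots \times C_{p^{l_r}}, \qquad l_1 \geq l_2 \geq \cdots \geq l_r, \]
so that $\exp(A) = p^{l_1}$. Fix $b$ with $0 \leq b \leq l_1$. Let $x$ generate the first factor $C_{p^{l_1}}$, and take
\[ N = \langle x^{p^b} \rangle \times C_{p^{l_2}} \times \cdots \times C_{p^{l_r}}. \]
Then
\[ A/N \cong C_{p^{l_1}}/\langle x^{p^b}\rangle \cong C_{p^b}, \]
because $\langle x^{p^b}\rangle$ has order $p^{l_1-b}$ in the cyclic group $C_{p^{l_1}}$. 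The edge cases fit the same formula: $b=0$ gives $N=A$, and $b=l_1$ gives $N$ equal to the complement of the first factor.

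Neither step is a real obstacle. The only point requiring care is that the argument is carried out in a fixed decomposition of $A$, so $N$ must be transported back to $A$ through the chosen isomorphism. This causes no difficulty, since isomorphisms preserve both subgroups and quotients.
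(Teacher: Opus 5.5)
Your proof is correct and follows the paper's approach: the inequality comes from a cyclic quotient having order at most $\exp(A)$, and the existence part takes $N$ to be all but one cyclic factor of a decomposition of $A$, together with the $p^b$-th power of the remaining generator. The only difference is that you always use the largest factor $C_{p^{l_1}}$, which works since $b \leq l_1$. This avoids the paper's search for a factor of order at least $p^b$, whose displayed formula for $N$ uses $a_s$ regardless of which index the search finds.
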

	\begin{proof}
		 If $|M| < |A|/\exp(A)$, then $|A/M| > \exp(A)$, and hence, $A/M$ cannot be cyclic. Thus, if $A/M$ is cyclic, then $|M| \geq |A|/\exp(A)$.
		To prove the second part of the lemma, suppose
		\[ A = \langle a_1, a_2, \ldots, a_s \rangle \cong C_{p^{a}} \times C_{p^{r_2}} \times \cdots \times C_{p^{r_s}}, \]
		where $r_i$'s are positive integers such that $a = r_1\geq r_2\geq \cdots \geq r_s$, that is, $a = \exp(A)$. To find a subgroup $N$ of $A$ of order $|A|/p^b$ so that $A/N$ is cyclic, for each $0\leq b \leq a$, we use the following algorithm:
		\begin{enumerate}
			\item Let $X = [a = r_1, r_2, \ldots, r_s]$ be an ordered set, i.e., $X[j] = r_j$ for some $1\leq j \leq s$.
			\item Take $i=s$ (note that $i$ ranges from $1$ to $s$).
			\item If $X[i] \geq b$, then 
			\[ N = \left\langle a_1, a_2, \ldots, a_{s-1}, a_{s}^{p^{b}} \right\rangle. \]
			\item Otherwise, if $X[i] < b$, then $i = i-1$.
			\item Continue till $i=1$.
		\end{enumerate}
		Since $b\leq a$, existence of such an $N$ is guaranteed after some steps of the algorithm.
	\end{proof}
	We derive the following corollary with the help of Lemma \ref{lemma:normalsubgroupabelian}.
	\begin{corollary}\label{corollary:abeliancod}
		Let $A$ be an abelian $p$-group.
		Then $\cod(A) = \{ p^{i} ~|~ i= 0, 1, \ldots,\log_{p}(\exp(A)) \}$.
	\end{corollary}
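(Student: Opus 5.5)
Since $A$ is abelian, every $\chi \in \Irr(A)$ is linear, so $\cod(\chi) = |A : \ker\chi|$. The proof therefore reduces to describing which indices $|A:\ker\chi|$ occur, and this is exactly what Lemma \ref{lemma:normalsubgroupabelian} controls. I will prove the two inclusions separately.

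For the inclusion $\subseteq$: let $\chi \in \Irr(A)$. Then $\chi$ is a homomorphism $A \to \mathbb{C}^\times$ with image a finite subgroup of $\mathbb{C}^\times$, hence cyclic. So $A/\ker\chi$ embeds in $\mathbb{C}^\times$ and is cyclic. By the first part of Lemma \ref{lemma:normalsubgroupabelian} applied with $M = \ker\chi$, we get $|A/\ker\chi| \le \exp(A)$. Since $|A/\ker\chi|$ is a power of $p$, it equals $p^i$ for some $0 \le i \le \log_p(\exp(A))$.

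For the inclusion $\supseteq$: fix $i$ with $0 \le i \le \log_p(\exp(A))$. By the second part of Lemma \ref{lemma:normalsubgroupabelian}, there is $N \le A$ with $A/N \cong C_{p^i}$. Let $\lambda$ be a faithful linear character of the cyclic group $A/N$, for instance one sending a generator to a primitive $p^i$-th root of unity. Inflate $\lambda$ to a character $\chi \in \Irr(A)$. Then $\ker\chi = N$, and so $\cod(\chi) = |A:N| = p^i$.

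The only point needing care is the cyclicity of $A/\ker\chi$, which holds because finite subgroups of $\mathbb{C}^\times$ are cyclic. Everything else is a direct application of the lemma, so I do not expect a real obstacle.
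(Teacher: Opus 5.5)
Your proof is correct and follows the paper's argument. Both use the first part of Lemma \ref{lemma:normalsubgroupabelian}, applied with $M=\ker\chi$ and the cyclicity of $A/\ker\chi$, for the upper bound, and the second part to realize each $p^i$. You also make explicit two points the paper leaves tacit: that finite subgroups of $\mathbb{C}^\times$ are cyclic, and that inflating a faithful character of $A/N$ gives a character with kernel exactly $N$.
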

	\begin{proof}
		Since $A$ is abelian, all the irreducible characters are linear. Then $A/\ker(\chi)$ is cyclic, for each $\chi \in \Irr(A)$. Hence we consider those normal subgroups $N$ of $A$ such that $A/N$ is cyclic. Let $\exp(A) = p^a$. By Lemma \ref{lemma:normalsubgroupabelian}, $|\ker(\chi)| \geq |A|/p^a$ so that $\cod(\chi) \leq p^a$, for each $\chi$. Further, for each $b$ $(0\leq b \leq a)$, there exists $\chi \in \Irr(A)$ such that $|\ker(\chi)| = |A|/p^b$. That is, for each $b$ $(0\leq b \leq a)$, there exists $\chi \in \Irr(A)$ so that $\cod(\chi) = |A/\ker(\chi)| = p^b$. Therefore, $\cod(A) = \{ p^{b} ~|~ 0\leq b \leq a \} = \{ p^{i} ~|~ i= 0, 1, \ldots,\log_{p}(\exp(A)) \}$.
	\end{proof}
	
	From Corollary \ref{corollary:abeliancod}, we deduce the codegrees of the linear characters of any $p$-group.
	\begin{corollary} \label{cor:linearcod}
		Let $G$ be a $p$-group. Then $\cod_{\lin}(G) = \{ p^{i} ~|~ i= 0, 1, \ldots,\log_{p}(\exp(G/G')) \}$.
	\end{corollary}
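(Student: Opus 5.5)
The plan is to reduce the statement to Corollary \ref{corollary:abeliancod} applied to the abelian group $A = G/G'$. This rests on the standard correspondence between the linear characters of $G$ and the irreducible characters of $G/G'$.

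First, recall that a character $\lambda \in \lin(G)$ is a homomorphism $G \to \mathbb{C}^\times$. Hence $G' \leq \ker(\lambda)$, and $\lambda$ is the inflation of a unique $\bar\lambda \in \Irr(G/G')$. Conversely, every $\bar\lambda \in \Irr(G/G')$ is linear, because $G/G'$ is abelian, and inflates to a linear character of $G$. So inflation is a bijection $\Irr(G/G') \to \lin(G)$.

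Next, I will check that this bijection preserves codegrees. Under inflation we have $\ker(\lambda)/G' = \ker(\bar\lambda)$ and $\lambda(1) = \bar\lambda(1) = 1$. Therefore
\[ \cod(\lambda) = |G/\ker(\lambda)| = \left|(G/G')/(\ker(\lambda)/G')\right| = |(G/G')/\ker(\bar\lambda)| = \cod(\bar\lambda). \]
It follows that $\cod_{\lin}(G) = \cod(G/G')$.

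Finally, $G/G'$ is an abelian $p$-group, so Corollary \ref{corollary:abeliancod} gives $\cod(G/G') = \{p^i : 0 \le i \le \log_p(\exp(G/G'))\}$, which is the claimed set. There is no real obstacle here. The only point needing care is the kernel identification under inflation, which is routine from the third isomorphism theorem.
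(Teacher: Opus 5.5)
Your proposal is correct and is the argument the paper intends: it derives this corollary directly from Corollary \ref{corollary:abeliancod} applied to $G/G'$, without writing out a proof. Your inflation bijection $\Irr(G/G') \to \lin(G)$, together with the identification $\ker(\lambda)/G' = \ker(\bar\lambda)$, supplies exactly the routine details the paper leaves implicit.
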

	
	A group $G$ is called a VZ $p$-group if all its nonlinear irreducible complex characters vanish off the center.  Hence, $\nl(G)=\{|G/Z(G)|^{1/2}\lambda ~|~ \lambda \in \Irr(Z(G)) \textnormal{ such that } G{}' \nleq \ker(\lambda) \}$. A VZ $p$-group is nilpotent of class 2, and hence, the commutator subgroup of the group is contained in the center (see \cite{L2009}). In fact, $G$ is a VZ-group, if and only if 	$\cd(G)=\{1, |G/Z(G)|^{1/2} \}$ (see \cite[Theorem A]{FAM2001}). Further, from \cite[Lemma 2.4]{L2009 camina groups}, $G/Z(G)$ and $G'$ are elementary abelian	$p$-groups.  In Remark \ref{remark:VZgroups}, we obtain an algorithm for the computation of $\cod(G)$ when $G$ is a VZ $p$-group.
	
	\begin{remark} \label{remark:VZgroups}
		(Algorithm for the computation of $\cod(G)$ when $G$ is a VZ $p$-group)\\	\textnormal{Let $G$ be a VZ $p$-group. By Corollary \ref{cor:linearcod}, $\cod_{\lin}(G) = \{ p^i ~:~ i = 0, 1, \ldots, \log_{p}(\exp(G/G')) \}$. Thus, $\cod_{\lin}(G)$ depends on the exponent of $G/G'$. Since $\chi(1) = |G/Z(G)|^{1/2}$ for all $\chi\in \nl(G)$, computation of $\cod_{\nl}(G)$ depends solely on the possibilities of the order of $\ker(\chi)$.
			That is, for each $\chi \in \nl(G)$, \[ \cod(\chi) = \frac{|G/\ker(\chi)|}{\chi(1)} = \frac{|G/Z(G)||Z(G)/\ker(\chi)|}{|G/Z(G)|^{1/2}} = |G/Z(G)|^{1/2} \left| \frac{Z(G)}{\ker(\chi)} \right|. \]
			Clearly, $\ker(\chi) \lneq Z(G)$ for all $\chi\in \nl(G)$. Thus,
			\[ \cod_{\nl}(G) = \left\{ |G/Z(G)|^{1/2} \left| \frac{Z(G)}{N} \right| ~:~ N \lneq Z(G) \text{ such that } G' \nleq N \text{ and } \frac{Z(G)}{N} \text{ is cyclic} \right\}.  \] 
			Finally, $\cod(G) = \cod_{\lin}(G) \cup \cod_{\nl}(G)$.
		}
	\end{remark}
	By Remark \ref{remark:VZgroups}, it is clear that we may need more information about the group structure to compute the exact $\cod(G)$ for a general VZ-group.
	Now, we prove Theorem \ref{thm: VZ p-group}, where we utilize the properties of some specific classes of VZ $p$-groups to obtain their respective character codegrees sets.\\
	
	{\sc Proof of Theorem \ref{thm: VZ p-group}}.
	Let $G$ be a VZ $p$-group. By Corollary \ref{cor:linearcod}, $\cod_{\lin}(G) = \{ p^i ~:~ i = 0, 1, \ldots, \log_{p}(\exp(G/G')) \}$. We now investigate the set $\cod_{\nl}(G)$ for the cases (i)--(iv). We need to find out the orders of those subgroups $N$ of $Z(G)$ that do not contain $G'$, and that their quotients in $Z(G)$ are cyclic. \\
	First, suppose that $Z(G)$ is cyclic. Since $G'$ is elementary abelian and is contained in $Z(G)$, we must have $G' \cong C_{p}$. If $N$ is a nontrivial subgroup of $Z(G)$, then $N$ contains $G'$. Such an $N$ is not a suitable candidate for $\ker(\chi)$. Hence, $N$ must be the trivial subgroup of $Z(G)$, that is, $\ker(\chi) = 1$ for all $\chi \in \nl(G)$. Thus, $\cod(\chi) = |G/ \ker(\chi)|/\chi(1) = |G|/|G/Z(G)|^{1/2} = (|G||Z(G)|)^{1/2}$, for all $\chi \in \nl(G)$. Finally, $\cod(G) = \cod_{\lin}(G) \cup \cod_{\nl}(G) =  \{ p^i, (|G||Z(G)|)^{1/2}~:~ i = 0, 1, \ldots, \log_{p}(\exp(G/G')) \}$.\\
	Next, suppose that $Z(G)$ is elementary abelian, of rank $r$. Now, the required subgroup $N$ of $Z(G)$ must satisfy $Z(G)/N \cong C_p$ which implies that $|N| = |Z(G)|/p = p^{r-1}$. Thus, for $N$, choose a subgroup of $Z(G)$ of rank $r-1$ that does not contain $G'$. Note that there are no other possibilities for the order of such a subgroup $N$. Then, for all $\chi\in \nl(G)$, $|\ker(\chi)| = |N| = |Z(G)|/p$, and thus, $\cod(\chi) = p\cdot |G/Z(G)|^{1/2}$. Finally, $\cod(G) = \cod_{\lin}(G) \cup \cod_{\nl}(G) =  \{ p^i, p\cdot |G/Z(G)|^{1/2} ~:~ i = 0, 1, \ldots, \log_{p}(\exp(G/G')) \}$.\\
	Next, suppose that $d(G') = d(Z(G))$ and $Z(G) = \langle a_1,a_2,\ldots,a_r \rangle \cong C_{p^{l_1}} \times C_{p^{l_2}} \times \cdots \times C_{p^{l_r}}$ where $l_1\geq l_2\geq \cdots \geq l_r$. Now, for the required subgroup $N$, $Z(G)/N$ must be cyclic, thus $|Z(G)/N| \leq p^{l_1}$ which implies that $|N|\geq p^{l_2 + \cdots + l_r}$. Since $G'$ must not be contained in $N$, we must have $d(N)<r$. Then $d(N)$ must be $r-1$ (to ensure that $Z(G)/N$ is cyclic). This forces that  
	\[ |N| = p^{\sum_{k=1, k\neq j}^{r} l_k}, ~ \text{ for some } 1\leq j \leq r. \] 
	Conversely, if, for each $1\leq j \leq r$, we take $N = \langle a_1,\ldots,a_{j-1}, a_{j+1}, \ldots, a_r \rangle$, then $|N| = p^{\sum_{k=1, k\neq j}^{r} l_k}$, for each such $j$. Thus, if $\chi \in \nl(G)$, then $|\ker(\chi)|$ belongs to the set 
	\[ \{ p^{\sum_{k=1, k\neq j}^{r} l_k}~:~ j =1,2,\ldots,r \}. \] Hence, $\cod(\chi) = |G/\ker(\chi)|/\chi(1) = \frac{|G/Z(G)||Z(G)/\ker(\chi)|}{|G/Z(G)|^{1/2}}$.
	Thus, \[ \cod_{\nl}(G) = \{ p^{l_j}\cdot|G/Z(G)|^{1/2}~:~ j =1,2,\ldots,r \}. \] Finally, we get $\cod(G) = \cod_{\lin}(G) \cup \cod_{\nl}(G) =  \{ p^i, p^{l_j}\cdot|G/Z(G)|^{1/2}~:~ i = 0, 1, \ldots, \log_{p}(\exp(G/G')), j = 1,2,\ldots, r \} $. \\
	Finally, suppose $d(G') < d(Z(G)) = 2$. Since $G$ is a VZ $p$-group, $G'$ must be cyclic of order $p$. We have two cases here. \\
	\noindent {\bf Case I:} We can choose generators of $Z(G)$ such that $Z(G) = \langle \alpha, \beta \rangle \cong C_{p^{l_1}} \times C_{p^{l_2}}  ~ (l_1\geq l_2)$ and $G' = \langle \beta^{p^{l_2 - 1}} \rangle \cong C_p$. Now, for the required subgroup $N$, $Z(G)/N$ must be cyclic, thus $|Z(G)/N| \leq p^{l_1}$ which implies that $|N|\geq p^{l_2}$. For each $k$, where $l_2 \leq k \leq l_1$, consider the subgroups $N_k = \left\langle \alpha^{p^{l_1 - k}} \beta \right\rangle \cong C_{p^{k}}$. Then for each $l_2 \leq k \leq l_1$, $G'$ is not contained in $N_k$, and $Z(G)/N_k \cong C_{p^{l_1+l_2-k}}$. Thus, order of the kernel of a nonlinear irreducible character $\chi$ has the following possibilities:
	\[ p^k \text{ for } k = l_2, l_2 + 1, \ldots, l_1. \]
	Then $\cod(\chi) = |G/Z(G)|^{1/2} \cdot |Z(G)/N|$ has the possibilities $|G/Z(G)|^{1/2} \cdot p^{l_1+l_2-k}$ for $k = l_2, l_2 + 1, \ldots, l_1$. Thus, 
	\[ \cod_{\nl}(G) = \{ p^j \cdot |G/Z(G)|^{1/2} ~:~ j = l_2, l_2+1,\ldots, l_1 \}. \]
	\noindent {\bf Case II:} We can choose generators of $Z(G)$ such that $Z(G) = \langle \alpha, \beta \rangle \cong C_{p^{l_1}} \times C_{p^{l_2}}  ~ (l_1\geq l_2)$ and $G' = \langle \alpha^{p^{l_1 - 1}} \rangle \cong C_p$. Again, for the required subgroup $N$ (here $d(Z(G))=2$), $N$ and $Z(G)/N$ must be cyclic, thus $|Z(G)/N| \leq p^{l_1}$ which implies that $|N|\geq p^{l_2}$. Let $g$ be an element of $Z(G)$ of order greater than $p^{l_2}$. Then $g = \alpha^i \beta^j$ where $i$ is chosen so that the order of $\alpha^i$ is greater than $p^{l_2}$. Then $\alpha^{p^{l_1 - 1}} \in \langle \alpha^i \beta^j \rangle$, and hence, $G'$ is contained in any cyclic subgroup of order greater than $p^{l_2}$. The only possibility for the order of the required subgroup $N$ is $p^{l_2}$. Consider $N = \langle \beta \rangle \cong C_{p^{l_2}}$. Then $N$ and $Z(G)/N$ are cyclic, and $G'$ is not contained in $N$. Hence, $|\ker(\chi)|$ must be $p^{l_2}$ for all $\chi\in \nl(G)$. Hence, $\cod(\chi) = |G/Z(G)|^{1/2} \cdot p^{l_1+l_2-l_2} = p^{l_1} \cdot |G/Z(G)|^{1/2} = \exp(Z(G))\cdot|G/Z(G)|^{1/2}$, for all $\chi\in \nl(G)$. Thus, in this case, we have \[ \cod_{\nl}(G) = \{ \exp(Z(G))\cdot|G/Z(G)|^{1/2} \}.  \]
	Since $\cod(G) = \cod_{\lin}(G) \cup \cod_{\nl}(G)$ where $\cod_{\lin}(G) = \{ p^i~:~ i = 0,1,\ldots,\log_{p}(\exp(G/G')) \}$, we obtain the desired result.
	This concludes the proof. \qed \\
	
	With the help of Theorem \ref{thm: VZ p-group}, we obtain $\cod(G)$ when $G$ is an extraspecial $p$-group. Note that a $p$-group $G$ is called extraspecial if $Z(G) = G'$ and the quotient $G/Z(G)$ is a non-trivial elementary abelian $p$-group. It is easy to see that every extraspecial $p$-group is a VZ $p$-group. We have the following corollary.
	
	\begin{corollary} \label{cor:extraspecial p-group}
		Let $G$ be an extraspecial $p$-group. Then $\cod(G) = \{ 1, p, (p\cdot |G|)^{1/2} \}$.
	\end{corollary}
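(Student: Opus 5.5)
Since an extraspecial $p$-group is a VZ $p$-group, the plan is to apply Theorem \ref{thm: VZ p-group}(i). That part applies because $Z(G) = G'$ has order $p$ and is therefore cyclic. The work then reduces to evaluating the two quantities appearing in that formula: the exponent of $G/G'$, and $(|G||Z(G)|)^{1/2}$.

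First, $G/G' = G/Z(G)$ is, by the definition of extraspecial, a non-trivial elementary abelian $p$-group. Hence $\exp(G/G') = p$ and $\log_p(\exp(G/G')) = 1$. Corollary \ref{cor:linearcod} (equivalently, the first part of the formula in Theorem \ref{thm: VZ p-group}(i)) then gives $\cod_{\lin}(G) = \{1, p\}$.

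Second, $|Z(G)| = p$ because $Z(G) = G'$ is cyclic of order $p$. This holds because $G'$ is elementary abelian and contained in the cyclic group $Z(G)$, as noted in the proof of part (i); in any case it is standard for extraspecial groups. Therefore $(|G||Z(G)|)^{1/2} = (p\cdot |G|)^{1/2}$, and part (i) yields $\cod(G) = \{1, p, (p\cdot|G|)^{1/2}\}$.

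The only point needing care is justifying that extraspecial groups are VZ, and that $|Z(G)| = p$. For the VZ property, I would use \cite[Theorem A]{FAM2001}: the standard character theory of extraspecial groups gives $\cd(G) = \{1, |G/Z(G)|^{1/2}\}$, with $p^{2n}$ linear characters and $p-1$ faithful characters of degree $p^n$, where $|G| = p^{2n+1}$. The paper already asserts this as easy, so I would simply cite it. No real obstacle is expected.
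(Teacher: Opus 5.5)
Your proposal is correct and follows the paper's proof: an extraspecial group is VZ with $Z(G)=G'$ of order $p$, so Theorem \ref{thm: VZ p-group}(i) applies, and $G/G'=G/Z(G)$ being elementary abelian gives $\cod_{\lin}(G)=\{1,p\}$. One small point: your justification of $|Z(G)|=p$ through ``$G'$ is elementary abelian inside the cyclic group $Z(G)$'' is circular, since you derived the cyclicity of $Z(G)$ from $|Z(G)|=p$. Take $|Z(G)|=p$ directly from the standard definition of an extraspecial group; the paper's stated definition omits this condition, but its proof also relies on it.
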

	\begin{proof}
		Since $G$ is VZ as well, $Z(G) = G'$ is cyclic of order $p$ and $G/G'$ is elementary abelian. By Theorem \ref{thm: VZ p-group}, $\cod(G) =  \{ p^i, (|G||Z(G)|)^{1/2}~:~ i = 0, 1, \ldots, \log_{p}(\exp(G/G')) \} = \{ 1, p, (p\cdot |G|)^{1/2} \}$.
	\end{proof}
	
	Now, we deal with the Camina $p$-groups. First, we mention the preliminaries we utilize to obtain the character codegrees set. The pair $(G,N)$ is said to be a Camina pair if for every $g\in G \setminus N,~ gN \subseteq Cl_{G}(g),$ the conjugacy class of $g$. It is easy to see that $Z(G) \leq  N \leq G{}'$. When $N=G'$, the group $G$ is called a Camina group. By \cite[Lemma 3]{M1992}, if $G$ is a Camina group, then for all $g\in G \setminus G{}'$, $\chi(g) = 0$  for all $\chi\in \nl(G)$.
	Dark and Scoppola \cite{DS1996} proved that nilpotency class of $G$ is at most $3$. If $G$ is a Camina $p$-group of class 2, then $G' \leq Z(G)$. Thus, $G{}' = Z(G)$, and hence by \cite[Lemma 3]{M1992} and \cite[Corollary 2.3]{M1981}, $G$ is a VZ $p$-group such that $Z(G)$ and $G/G'$ are elementary abelian. If $G$ is a Camina $p$-group of class 3, then
	$(G, Z(G))$ is a Camina pair, and $|G/G'|=p^{2n}, |G'/Z(G)|=p^{n}$ and $|G/Z(G)|=p^{3n},$ where $n$ is even (see \cite[Theorem 5.2]{M1981}). Further, by \cite[Corollary 2.3 and 5.3]{M1981}, $G/G'$, $G'/Z(G)$ and $Z(G)$ are elementary abelian $p$-groups. Moreover, $\nl(G) = \Irr(G|Z(G)) \sqcup \nl(G/Z(G))$ as a disjoint union and $\cd(G) = \{1, p^{n}, p^{3n/2}\}$ (see \cite[Section 3, pp. 51--52]{PS2014} for the detailed discussion). We obtain the following result for Camina $p$-groups.
	
	\begin{theorem} \label{thm:CodG Camina pgroup}
		Let $G$ be a Camina $p$-group.
		\begin{enumerate}
			\item If the nilpotency class of $G$ is 2, then $\cod(G) = \{ 1, p, p\cdot|G/Z(G)|^{1/2} \}$.
			\item If the nilpotency class of $G$ is 3, then  $\cod(G) = \left\{ 1, p, p \cdot |G/G'|^{1/2}, p \cdot |G/Z(G)|^{1/2}  \right\}$. 
		\end{enumerate} 
	\end{theorem}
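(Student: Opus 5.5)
For part (1) I would reduce directly to Theorem \ref{thm: VZ p-group}(ii). As recalled just before the statement, a Camina $p$-group of class 2 is a VZ $p$-group with $G'=Z(G)$, and both $Z(G)$ and $G/G'$ are elementary abelian. Hence $\exp(G/G')=p$, so $\cod_{\lin}(G)=\{1,p\}$ by Corollary \ref{cor:linearcod}. Part (ii) of Theorem \ref{thm: VZ p-group} then gives $\cod_{\nl}(G)=\{p\cdot|G/Z(G)|^{1/2}\}$. Together these give $\cod(G)=\{1,p,p\cdot|G/Z(G)|^{1/2}\}$.

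For part (2) I would use the decomposition $\nl(G)=\Irr(G|Z(G))\sqcup \nl(G/Z(G))$. Since $G/G'$ is elementary abelian and nontrivial, Corollary \ref{cor:linearcod} again gives $\cod_{\lin}(G)=\{1,p\}$. I then treat the two families of nonlinear characters separately.

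\emph{Characters in $\Irr(G|Z(G))$.} Because $(G,Z(G))$ is a Camina pair, such a $\chi$ vanishes off $Z(G)$. By orthogonality $\chi(1)^2=|G:Z(G)|$, so $\chi(1)=p^{3n/2}$, and $\chi_{Z(G)}=\chi(1)\lambda$ for some nontrivial $\lambda\in\Irr(Z(G))$. Then $\ker\chi=\ker\lambda$, because $\chi$ vanishes outside $Z(G)$. Since $Z(G)$ is elementary abelian, $|Z(G):\ker\lambda|=p$. This gives $\cod(\chi)=p\cdot|G/Z(G)|/p^{3n/2}=p\cdot|G/Z(G)|^{1/2}$.

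\emph{Characters in $\nl(G/Z(G))$.} Set $\bar G=G/Z(G)$. I claim $\bar G$ is a Camina group of class 2 with $Z(\bar G)=\bar G'=G'/Z(G)$. The Camina property passes to quotients, since images of conjugacy classes are conjugacy classes and $\bar G\setminus\bar G'$ is the image of $G\setminus G'$. The class is exactly 2 because $G$ has class 3, and a Camina group of class 2 has center equal to its derived subgroup. Each such $\chi$ is then nonlinear and irreducible for $\bar G$, so by part (1) applied to $\bar G$,
\[
\cod(\chi)=|\bar G/\ker\bar\chi|/\chi(1)=p\cdot|\bar G/Z(\bar G)|^{1/2}=p\cdot|G/G'|^{1/2},
\]
since $|G/\ker\chi|=|\bar G/\ker\bar\chi|$ when $Z(G)\le\ker\chi$.

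Taking the union of the linear codegrees and the two nonlinear families gives the stated set. The main point needing care is the claim about $\bar G$: that the quotient is again Camina and that its center is exactly $G'/Z(G)$. One could alternatively cite \cite{M1981} or \cite{PS2014}, where $\nl(G/Z(G))$ is described as characters of degree $p^n$ vanishing off $G'/Z(G)$. A direct check of the kernels, using that $G'/Z(G)$ is elementary abelian, would serve as a fallback.
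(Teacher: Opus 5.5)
Your proposal is correct and follows essentially the same route as the paper. Part (1) goes through Theorem \ref{thm: VZ p-group}(ii), and part (2) uses the split $\nl(G)=\Irr(G|Z(G))\sqcup\nl(G/Z(G))$: the first family is handled via the Camina pair $(G,Z(G))$, the second via the class-$2$ Camina quotient $G/Z(G)$. The only minor difference is that you verify $Z(G/Z(G))=G'/Z(G)$ directly and then apply part (1) to the quotient, whereas the paper cites Macdonald's Corollary 5.3 for that equality and recomputes $|\ker\nu|$ and $\nu(1)$ by hand.
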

	\begin{proof}
		Let $G$ be a Camina $p$-group. If $\cl(G) = 2$, then $G$ is a VZ $p$-group such that $Z(G)$ and $G/G'$ are elementary abelian. By Theorem \ref{thm: VZ p-group}, $\cod(G) = \{ 1, p, p\cdot|G/Z(G)|^{1/2} \}$. Now, suppose $\cl(G) = 3$. Since $G/G'$ is elementary abelian, $\cod_{\lin}(G) = \{ 1, p \}$. Now, we compute $\cod_{\nl}(G)$. Since $\nl(G) = \Irr(G|Z(G)) \sqcup \nl(G/Z(G))$ as a disjoint union, first take $\chi \in \Irr(G|Z(G))$. Since $(G, Z(G))$ is a Camina pair, $\chi(g) = 0$ for all $g\in G\setminus Z(G)$. By \cite{PS2014}, $\chi(1) = p^{3n/2}$, and thus, $\chi = p^{3n/2}\lambda$, for some $\lambda \in \Irr(Z(G))$. Then $\ker(\chi) = \ker(\lambda)$. Since $Z(G)$ is elementary abelian and $Z(G)/\ker(\lambda)$ must be cyclic, we must have $|\ker(\lambda)| = |Z(G)|/p$. Therefore, \[ \cod(\chi) = \frac{|G/\ker(\chi)|}{\chi(1)} = \frac{|G|}{|\ker(\lambda)|\chi(1)} = \frac{|G|\cdot p}{|Z(G)|\cdot |G/Z(G)|^{1/2} } = p\cdot |G/Z(G)|^{1/2}. \]
		Now, suppose $\nu \in \nl(G/Z(G))$. Since $(G,G')$ is a Camina pair, $(G/Z(G),G'/Z(G))$ is also a Camina pair. By \cite[Corollary 5.3]{M1981}, $Z(G/Z(G)) = G'/Z(G) = [G/Z(G), G/Z(G)]$, that is, $G/Z(G)$ is a Camina $p$-group with $\cl(G) = 2$. Thus, $G/Z(G)$ is a VZ $p$-group such that $Z(G/Z(G))$ and $((G/Z(G))/(G'/Z(G)))$ are elementary abelian. Hence, $|\ker(\nu)| = Z(G/Z(G))/p = |G'/Z(G)|/p$. Then \[  \cod(\nu) = \frac{|(G/Z(G))/\ker(\nu)|}{\nu(1)} = \frac{|G/Z(G)|\cdot p}{|G'/Z(G)|\cdot \nu(1)} = \frac{p^{3n}\cdot p}{p^n \cdot p^n} = p^{n+1} = p \cdot |G/G'|^{1/2}.    \]
		Therefore, $\cod(G) = \left\{ 1, p, p \cdot |G/G'|^{1/2}, p \cdot |G/Z(G)|^{1/2}  \right\}$.
	\end{proof}

	\section{Character Codegrees Sets of $p$-Groups of Order $\leq p^5$, where $p$ is an odd prime} \label{sec:groups less than p6}
	
	In this section, we compute the character codegrees set for $p$-groups of order $\leq p^5$, where $p$ is an odd prime.
	The groups of order $\leq p^5~(p>2)$ are listed in \cite{J1980}. There are 2 isoclinic families for groups of order $p^3$, 3 for groups of order $p^4$, and 10 for groups of order $p^5$. In all the cases, the family $\Phi_1$ consists of all abelian	groups. The structure of groups of order $p^4$ and $p^5$ when $p=3$ is different from when $p$ is a prime greater than 3. We theoretically prove the results for $p\geq 5$. 
Since the character codegrees sets of the abelian groups can be computed by Corollary \ref{cor:linearcod}, we focus on the non-abelian groups. We reference extensively the list of presentations, nilpotency classes, character degree sets, and other data available in \cite{J1980}, and use its group identifiers. For completeness, we compute $\cod(G)$ for the groups of order $3^n$ (for $n=3,4,5$) using {\sc Magma}; we summarize the results in Table \ref{t:3-groups}. Henceforth in this section, $p$ denotes a prime greater than 3, unless stated otherwise.

	
	\subsection{Character codegrees sets of $p$-groups of order less than $p^5$} \label{subsec:codGless than p5}
	We prove the following. 
	
	\begin{theorem} \label{thm:nonabelianlessthanp5}
		Let $G$ be a non-abelian $p$-group.
		\begin{enumerate}
			\item If $|G| = p^3$, then $\cod(G) = \{ 1, p, p^2 \}$.
			\item Let $|G| = p^4$ and 
			\begin{enumerate}
				\item [\rmfamily(i)] suppose $\cl(G) = 2$. If $Z(G)$ is cyclic, then $\cod(G) = \{1, p, p^3  \}$, or $\{ 1, p, p^2, p^3 \}$. If $Z(G)$ is non-cyclic, then $\cod(G) = \{ 1, p, p^2 \}$ .
				\item [\rmfamily(ii)] suppose $\cl(G) = 3$. Then $\cod(G) = \{ 1, p, p^2, p^3 \}$.
			\end{enumerate}
		\end{enumerate} 
	\end{theorem}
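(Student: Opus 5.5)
Throughout, $p>3$, and I will lean on Corollary \ref{cor:linearcod}, Theorem \ref{thm: VZ p-group} and Corollary \ref{cor:extraspecial p-group}, together with James' list \cite{J1980}.

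\textbf{Order $p^3$.} Here $G$ is extraspecial, so Corollary \ref{cor:extraspecial p-group} gives $\cod(G)=\{1,p,(p\cdot p^3)^{1/2}\}=\{1,p,p^2\}$.

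\textbf{Order $p^4$, class $2$.} Then $|G'|=p$, $|Z(G)|=p^2$ and $\cd(G)=\{1,p\}=\{1,|G/Z(G)|^{1/2}\}$. By \cite[Theorem A]{FAM2001} $G$ is VZ, so Theorem \ref{thm: VZ p-group} applies.
\begin{itemize}
\item If $Z(G)\cong C_{p^2}$, part (i) gives nonlinear codegree $(p^4\cdot p^2)^{1/2}=p^3$. The linear part is $\{1,\dots,\exp(G/G')\}$. Since $G/G'$ has order $p^3$ and cannot be cyclic, its exponent is $p$ or $p^2$. This yields $\{1,p,p^3\}$ or $\{1,p,p^2,p^3\}$; the groups $\phi_2(211)b$ and $\phi_2(1^4)$-type examples realise these cases.
\item If $Z(G)$ is elementary abelian of order $p^2$, part (ii) gives nonlinear codegree $p\cdot p=p^2$. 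I then need $\exp(G/G')\le p^2$, which holds because $|G/G'|=p^3$ is non-cyclic. So $\cod(G)=\{1,p,p^2\}$.
\end{itemize}

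\textbf{Order $p^4$, class $3$.} This is maximal class, with $|G'|=p^2$, $|Z(G)|=p$ and $G/G'\cong C_p\times C_p$, so $\cod_{\lin}(G)=\{1,p\}$. Since $\cd(G)=\{1,p\}$, I split $\nl(G)$ into two parts.
\begin{itemize}
\item For $\chi\in\nl(G/Z(G))$: $G/Z(G)$ has order $p^3$ and is non-abelian, so by the first part these codegrees are $p^2$.
\item For $\chi\in\Irr(G|Z(G))$: $\ker\chi$ is normal and does not contain $Z(G)$. Since $Z(G)$ is the unique minimal normal subgroup of this class-3 group, $\ker\chi=1$ and $\cod(\chi)=p^4/p=p^3$.
\end{itemize}
I must check that both types of character exist. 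Characters in $\Irr(G|Z(G))$ exist because $G$ is non-abelian with cyclic center. If such a $\chi$ were linear, $\ker\chi$ would contain $G'\ge Z(G)$; so these characters are nonlinear, confirming $p^3\in\cod(G)$. Hence $\cod(G)=\{1,p,p^2,p^3\}$.

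\textbf{Main obstacle.} The delicate step is the claim in the class-3 case that $Z(G)$ is the unique minimal normal subgroup. It holds because every nontrivial normal subgroup of a $p$-group meets the center nontrivially, and here $|Z(G)|=p$. In the class-2 cyclic-center case, the work is confirming via \cite{J1980} that both exponents of $G/G'$ actually occur, and checking the precise list of class-2 groups.
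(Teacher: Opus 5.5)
Your proof is correct. For order $p^3$ and for class $2$ at order $p^4$ it is essentially the paper's argument: $G$ is VZ by \cite[Theorem A]{FAM2001}, then Theorem \ref{thm: VZ p-group} applies, together with a bound on $\exp(G/G')$. Two small differences in that part:
\begin{itemize}
\item You bound $\exp(G/G')\le p^2$ by non-cyclicity of $G/G'$. The paper instead uses $\exp(G)\mid p^2$ in the elementary-abelian-center case.
\item In the cyclic-center case, the paper identifies the two groups $\phi_2(31)$ and $\phi_2(211)b$ from \cite{J1980} and computes $\exp(G/G')$ for each. This also shows that both alternatives actually occur.
\end{itemize}

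The real difference is the class-$3$ case.
\begin{itemize}
\item The paper invokes the Croome--Lewis result on maximal class groups (\cite[Theorem A]{CL2020}).
\item You compute directly. You split $\nl(G)=\nl(G/Z(G))\sqcup\Irr(G|Z(G))$. The first part has codegree $p^2$ by the order-$p^3$ case, since codegree is unchanged under inflation. The second part consists of faithful characters of degree $p$, hence codegree $p^3$, because $|Z(G)|=p$ forces every nontrivial normal subgroup to contain $Z(G)$.
\end{itemize}
Your route needs only $|Z(G)|=p$, $Z(G)\le G'$, $|G:G'|=p^2$ and $\cd(G)=\{1,p\}$ (from $\chi(1)^2\le|G:Z(G)|=p^3$). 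It is self-contained and does not use $p>3$. The paper's version is shorter but rests on an external theorem.

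One small inaccuracy: $\phi_2(1^4)=\phi_2(1^3)\times C_p$ has center $C_p\times C_p$, so it is not an example for the cyclic-center case. The group there realising $\{1,p,p^2,p^3\}$ is $\phi_2(31)$, with $G/G'\cong C_{p^2}\times C_p$. The statement only asserts the dichotomy, so neither this example nor your remark about checking that both exponents occur is needed for the proof.
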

	\begin{proof}  Let $G$ be a non-abelian group. If $|G| = p^3$, then $G$ is an extraspecial group. 	By Corollary \ref{cor:extraspecial p-group},	 $\cod(G) = \{ 1, p, \left( p \cdot p^3 \right)^{1/2}  \} = \{ 1, p, p^2 \}$. 
		Now, suppose $|G| = p^4$. If $\cl(G) = 2$, then $G$ belongs to the isoclinic family $\Phi_2$, and hence, $|Z(G)| = p^2$, $|G'| = p$ and $\cd(G) = \{ 1, p \} = \{ 1, |G:Z(G)|^{1/2} \}$ (see \cite[Table 4.1]{J1980}). Then $G$ is a VZ-group \cite[Theorem A]{FAM2001}. If $Z(G)\cong C_p \times C_p$, by Theorem \ref{thm: VZ p-group}, $\cod(G) = \cod_{\lin}(G) \cup \{ p\cdot |G/Z(G)|^{1/2} \} = \cod_{\lin}(G) \cup \{ p^2 \}$. By Corollary \ref{cor:linearcod}, $\cod_{\lin}(G) = \{ p^i~:~ i=0,1,\ldots, \log_{p}(\exp(G/G')) \}$. Hence, we need $\exp(G/G')$.
		Since $Z(G)$ and $G/Z(G)$ are elementary abelian, $\exp(G)$ divides $p^2$. Then $\exp(G/G')$ is either $p$ or $p^2$. In both the cases, we get $\cod(G) = \{ 1, p, p^2 \}$.\\
		When $Z(G)$ is cyclic, then $G = \phi_2(31)$, or $\phi_2(211)b$. Since $G$ is a VZ $p$-group,
		by Theorem \ref{thm: VZ p-group}, $\cod(G) = \cod_{\lin}(G) \cup \{ p\cdot \left(|G||Z(G)|^{1/2}\right) \} = \cod_{\lin}(G) \cup \{ p^3 \}$.
		We compute the exponent of $G/G'$ for each group separately. If  
		\[ G = \phi_2(31) = \langle \alpha, \alpha_1, \alpha_2 ~|~ [\alpha_1, \alpha] = \alpha^{p^2} = \alpha_2, \alpha_{1}^p = \alpha_{2}^{p} = 1 \rangle, \]
		then $G' = \langle \alpha^{p^2} \rangle \cong C_p$ and $G/G' = \langle \alpha G', \alpha_{1}G' \rangle \cong C_{p^2} \times C_{p}$. Hence, $\exp(G/G') = p^2$ and thus, $\cod(G) = \{ 1, p, p^2, p^3 \}$. Lastly, if
		\[ G = \phi_2(211)b = \langle \alpha, \alpha_1, \alpha_2, \gamma ~|~ [\alpha_1, \alpha] = \gamma^{p} = \alpha_2, \alpha^p = \alpha_{1}^p = \alpha_{2}^{p} = 1 \rangle, \]
		then $G' = \langle \gamma^{p} \rangle \cong C_p$ and $G/G' = \langle \alpha G', \alpha_{1}G', \gamma G' \rangle \cong C_{p} \times C_{p} \times C_{p}$. Then $\exp(G/G') = p$, and thus, $\cod(G) = \{ 1, p, p^3 \}$.\\
		Finally, if $\cl(G) = 3$, then $G$ is a maximal class $p$-group. By \cite[Theorem A]{CL2020}, $\cod(G) = \{ 1, p, p^2, p^3 \}$.
		This completes the proof.
	\end{proof}
	
	From Theorem \ref{thm:nonabelianlessthanp5}, for any non-abelian group of order $p^3$, $\cod(G) = \{1, p, p^2\}$. However, we obtain different possibilities for $\cod(G)$ when $G$ is a non-abelian group of order $p^4$. To avoid confusion, we list down the character codegrees set corresponding to each group of order $p^4$ in Table \ref{t:p34}.
	
	\begin{longtable}[c]{| l| c |}
		\caption{Character codegrees set of non-abelian groups of order $p^4$ \label{t:p34}}\\

		\hline
		Group $G$ &  $\cod(G)$  \\ 
		\hline
		\endfirsthead
		
		\hline
		\multicolumn{2}{|c|}{Continuation of Table \ref{t:p34}}\\
		\hline
		Group $G$ &  $\cod(G)$ \\  
		\hline
		\endhead
		
		\hline
		\endfoot

		\endlastfoot
		\hline
		\vtop{\hbox{\strut $\phi_{2}(211)a$, } \hbox{\strut $\phi_{2}(1^4)$, }  \hbox{\strut $\phi_{2}(22)$, } \hbox{\strut $\phi_{2}(211)c$ }}  & $\{ 1, p, p^2 \}$  \\
		\hline
		\vtop{\hbox{\strut $\phi_{2}(31)$ } }  & $\{ 1, p, p^2, p^3 \}$ \\
		\hline
		$\phi_{2}(211)b$ &  $\{ 1, p, p^3 \}$ \\
		\hline
		\vtop{\hbox{\strut $\phi_{3}(211)a$, } \hbox{\strut $\phi_{3}(1^4)$, } \hbox{\strut $\phi_{3}(211)b_r$ (for $r=1, \nu$) } } &  $\{ 1, p, p^2, p^3 \}$ \\
		\hline
		

	\end{longtable}

	
	\subsection{Character codegrees sets of $p$-groups of order $p^5$} \label{subsec:codG p5}
	In this subsection, we deal with the groups of order $p^5$. 
		The groups of order $p^5$ are classified into 10 isoclinism
			families, $\Phi_{i}$ for $1 \leq i \leq 10$, where $\Phi_1$ consists of all abelian
			groups (see \cite{J1980}). The groups with nilpotency class 2 belong to the family $\Phi_i$ for $i=2,4,5$, those of nilpotency class 3 belong to the family $\Phi_{j}$ for $j=3,6,7,8$, and the maximal class groups belong to the family $\Phi_{k}$ for $k = 9,10$. For $G\in \Phi_{i}$, $i=2,3,4,6, 9$, $\cd(G) = \{1, p\}$; for $G\in \Phi_5$, $\cd(G) = \{ 1, p^2 \}$, and for $G\in \Phi_{j}$, $j=7,8,10$, $\cd(G) = \{ 1, p, p^2 \}$. Note that for $G\in \Phi_2 \cup \Phi_5$, $\cd(G) = \{ 1, |G/Z(G)|^{1/2} \}$, that is, $G$ is a VZ $p$-group. We utilize Theorem \ref{thm: VZ p-group} and Corollary \ref{cor:extraspecial p-group} to obtain $\cod(G)$ when $G\in \Phi_2 \cup \Phi_5$. 
	
	
	\begin{lemma} \label{lemma:groupsp5 phi2}
		Let  $G$ be a group of order $p^5$ belonging to the family $\Phi_{2}$ such that 
		\begin{enumerate}
			\item [(i)] $Z(G)\cong C_{p^3}$. Then $\cod(G) = \{ p^i, p^4 ~:~ i = 0, 1, \ldots, \log_{p}(\exp(G/G')) \}$.
			\item [(ii)] $Z(G)\cong C_{p^2}\times C_p$. Then $\cod(G) = \{ p^i, p^2, p^3 ~:~ i = 0, 1, \ldots, \log_{p}(\exp(G/G')) \}$, or $\{ p^i, p^3 ~:~ i = 0, 1, \ldots, \log_{p}(\exp(G/G')) \}$.
			\item [(iii)] $Z(G)\cong C_{p}^3$. Then $\cod(G) = \{ p^i, p^2 ~:~ i = 0, 1, \ldots, \log_{p}(\exp(G/G')) \}$.
		\end{enumerate}
		
	\end{lemma}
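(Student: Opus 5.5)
Since every group in $\Phi_2$ is a VZ $p$-group, I will read the three cases off Theorem \ref{thm: VZ p-group}, with $\cod_{\lin}(G)$ supplied by Corollary \ref{cor:linearcod}. First I record the structural data for $|G|=p^5$ in $\Phi_2$ from \cite{J1980}: $|G'|=p$, $|G/Z(G)|=p^2$, and $\cd(G)=\{1,p\}=\{1,|G/Z(G)|^{1/2}\}$. By \cite[Theorem A]{FAM2001}, $G$ is VZ, so $|Z(G)|=p^3$ and $|G/Z(G)|^{1/2}=p$. The linear part always gives $\{p^i : 0\le i\le \log_p(\exp(G/G'))\}$, so it remains to determine $\cod_{\nl}(G)$ in each case.

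For (i), $Z(G)\cong C_{p^3}$ is cyclic, and Theorem \ref{thm: VZ p-group}(i) gives the nonlinear codegree $(|G||Z(G)|)^{1/2}=(p^5\cdot p^3)^{1/2}=p^4$. For (iii), $Z(G)\cong C_p^3$ is elementary abelian, and Theorem \ref{thm: VZ p-group}(ii) gives $p\cdot|G/Z(G)|^{1/2}=p^2$. Both of these are immediate.

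For (ii), $Z(G)\cong C_{p^2}\times C_p$, so $l_1=2$ and $l_2=1$. Since $d(G')=1<2=d(Z(G))$, we are in Theorem \ref{thm: VZ p-group}(iv). The first alternative, Case I, occurs when $G'$ is generated by the socle element of a cyclic direct factor that can be taken as the $\beta$-factor of order $p^{l_2}=p$. It yields nonlinear codegrees $p^j\cdot p$ for $j=1,2$, that is, $\{p^2,p^3\}$. The second alternative, Case II, occurs when $G'=\langle\alpha^{p}\rangle$ lies in $\Omega_1$ of the $C_{p^2}$ factor. It yields $\exp(Z(G))\cdot p=p^3$.

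The one point I need to check is that these two cases exhaust the possibilities in $C_{p^2}\times C_p$. Every subgroup of order $p$ lies in $\Omega_1(Z(G))\cong C_p\times C_p$. If $G'=\langle\alpha^p\rangle=\mho_1(Z(G))$, we are in Case II. Otherwise $G'=\langle\alpha^{pk}\beta^{m}\rangle$ with $m\not\equiv 0 \pmod p$, and then $\langle\alpha\rangle\times G'$ is another decomposition of $Z(G)$ in which $G'$ itself is the $C_p$ factor, so we are in Case I. This dichotomy, together with substituting $l_1=2$, $l_2=1$, $|G/Z(G)|^{1/2}=p$ into Theorem \ref{thm: VZ p-group}(iv), gives the two listed possibilities. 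I do not need to identify which named groups fall into which case to prove the statement as formulated.
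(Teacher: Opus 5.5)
Your proposal is correct and follows the paper's route: you recognize $G$ as a VZ $p$-group from $\cd(G)=\{1,|G/Z(G)|^{1/2}\}$, then read off the three cases from Theorem~\ref{thm: VZ p-group}(i), (ii) and (iv) with $|G/Z(G)|^{1/2}=p$. You also check that, for $Z(G)\cong C_{p^2}\times C_p$, the subgroup $G'$ always falls into Case I or Case II, using the decomposition $Z(G)=\langle\alpha\rangle\times G'$ when $G'\neq\langle\alpha^{p}\rangle$. That check is correct, and it fills in a detail the paper leaves implicit.
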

	\begin{proof}
		Let  $G$ be a group of order $p^5$ belonging to the family $\Phi_{2}$. Then $|Z(G)| = p^3$ and $\cd(G) = \{ 1, p \} = \{ 1, |G/Z(G)|^{1/2} \}$. Hence, $G$ is a VZ $p$-group where $Z(G) \cong C_{p^3}$, $C_{p^2}\times C_p$, or $C_{p}^3$. Then, the result follows from Theorem \ref{thm: VZ p-group}.
	\end{proof}

	\begin{remark} \label{remark: VZ p-groups phi2}
		\textnormal{Here, we mention the information required to compute the exact $\cod(G)$ when $G$ is a group of order $p^5$ belonging to $\Phi_2$. We segregate the groups based on their center types here.\\
			Case I - $Z(G) \cong C_{p^3}$: In this case, $G \in \{ \phi_{2}(41), \phi_{2}(311)b \}$.\\
			Case II - $Z(G) \cong C_{p^2}\times C_p$: Take
			\begin{align*}
				X_1 = & \{  \phi_{2}(221)b, \phi_{2}(2111)d,  \phi_{2}(32)a_2, \phi_{2}(311)c \} \text{ and }\\
				X_2 = & \{ \phi_{2}(31),
				\phi_{2}(2111)b, \phi_{2}(32)a_1, \phi_{2}(221)c \}.
			\end{align*}
			Then $G \in X_1 \cup X_2$, when $Z(G) \cong C_{p^2}\times C_p$. For all $G \in X_1$, we can choose $a, b \in G$ such that $Z(G) = \langle a, b \rangle\cong C_{p^2} \times C_{p}$ and $G' = \langle b \rangle \cong C_{p}$, whereas, 
			for all $G \in X_2$, we can choose $a,b \in G$ such that $Z(G) = \langle a,b \rangle\cong C_{p^2} \times C_{p}$ and $G' = \langle a^p \rangle \cong C_{p}$.\\
			Case III - $Z(G) \cong C_{p}^3$: In this case, $G \in \{ \phi_{2}(221)a, \phi_{2}(2111)a, \phi_{2}(2111)c, \phi_{2}(1^5), \phi_{2}(221)d \}$.\\
			By Lemma \ref{lemma:groupsp5 phi2}, we just need the exponent of $G/G'$ for each $G\in \Phi_{2}$ to compute $\cod(G)$. For each group $G \in \Phi_2$, we display the corresponding $\exp(G/G')$ and $\cod(G)$ in Table \ref{t:VZ groups phi2 phi5}.   }
	\end{remark}
	
	\begin{lemma} \label{lemma:p5 phi5}
		Let $G$ be a non-abelian group of order $p^5$ belonging to the family $\phi_5$. Then $\cod(G) = \{ 1, p, p^3 \}$.
	\end{lemma}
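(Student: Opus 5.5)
The plan is to treat $G \in \Phi_5$ as a VZ $p$-group and apply Theorem \ref{thm: VZ p-group} together with Corollary \ref{cor:linearcod}.

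First, the structure. By the discussion preceding Lemma \ref{lemma:groupsp5 phi2}, $\cd(G) = \{1, p^2\}$ for $G \in \Phi_5$, and $\cd(G) = \{1, |G/Z(G)|^{1/2}\}$. So $G$ is a VZ $p$-group with $|G/Z(G)| = p^4$, and therefore $|Z(G)| = p$. Since $G' \leq Z(G)$ and $G$ is non-abelian, we get $Z(G) = G' \cong C_p$. By \cite[Lemma 2.4]{L2009 camina groups}, $G/Z(G)$ is elementary abelian. Hence $G$ is extraspecial of order $p^5$. This matches \cite{J1980}, where $\Phi_5$ consists exactly of $\phi_5(1^5)$ and $\phi_5(2111)$, the two extraspecial groups of order $p^5$.

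Next, apply Corollary \ref{cor:extraspecial p-group}, which gives
\[ \cod(G) = \{1, p, (p\cdot p^5)^{1/2}\} = \{1, p, p^3\}. \]
Equivalently, Theorem \ref{thm: VZ p-group}(i) applies with $Z(G)$ cyclic: the unique nonlinear codegree is $(|G||Z(G)|)^{1/2} = p^3$. The linear codegrees are $\{1, p\}$ by Corollary \ref{cor:linearcod}, because $G/G' = G/Z(G)$ is elementary abelian and nontrivial.

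The only step that needs care is justifying $|Z(G)| = p$. I would argue it through the VZ degree identity $p^2 = |G/Z(G)|^{1/2}$ rather than by inspecting presentations. If more care is wanted, I would cross-check against the listed presentations in \cite{J1980}, where the center of $\phi_5(2111)$ is $\langle \alpha_5 \rangle$ of order $p$, even though that group has exponent $p^2$. The exponent of $G$ does not matter, since only $\exp(G/G')$ enters the computation.
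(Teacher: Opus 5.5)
Your proposal is correct and follows essentially the same route as the paper: show that $G\in\Phi_5$ is a VZ $p$-group with $Z(G)=G'\cong C_p$, conclude that $G$ is extraspecial, and apply Corollary~\ref{cor:extraspecial p-group} to get $\{1,p,p^3\}$. The only minor difference is that you derive $|Z(G)|=p$ from $p^2=|G/Z(G)|^{1/2}$ and cite the elementary-abelian quotient explicitly, whereas the paper reads $G'=Z(G)\cong C_p$ directly off James's classification.
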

	\begin{proof}
		From \cite[Section 4.5]{J1980}, if $G\in \Phi_5$, then $G' = Z(G)\cong C_p$ and $\cd(G) = \{  1, p^2 \}$. Thus, $\cd(G) = \{ 1, |G/Z(G)|^{1/2}\}$, that is, $G$ is a VZ $p$-group. Since $Z(G) = G'$, $G$ is also an extraspecial $p$-group. By Corollary \ref{cor:extraspecial p-group}, $\cod(G) = \{ 1, p, p^3 \}$
	\end{proof}
	
	We present the character codegrees set for each VZ $p$-group of order $p^5$ in Table \ref{t:VZ groups phi2 phi5}.
	
	\begin{longtable}[c]{|c|c|c||c|c|c|}
		\caption{Character codegrees set of VZ $p$-groups of order $p^5$  \label{t:VZ groups phi2 phi5}}\\
		
		\hline
		Group $G$ & $\exp(G/G')$ & $\cod(G)$ & 	Group $G$ & $\exp(G/G')$ & $\cod(G)$ \\ 
		\hline
		\endfirsthead
		
		\hline
		\multicolumn{6}{|c|}{Continuation of Table \ref{t:VZ groups phi2 phi5}}\\
		\hline
		Group $G$ & $\exp(G/G')$ & $\cod(G)$ & 	Group $G$ & $\exp(G/G')$ & $\cod(G)$ \\  
		\hline
		
		\endhead
		
		\hline
		\endfoot

		\endlastfoot
		\multicolumn{6}{| c |}{$Z(G) \cong C_{p^3}$}\\
		\hline
		\vtop{\hbox{\strut $\phi_{2}(41)$ }} & $p^3$ & $\{ 1, p, p^2, p^3, p^4 \}$ & \vtop{\hbox{\strut $\phi_{2}(311)b$ }} & $p^2$ & $\{ 1, p, p^2, p^4 \}$ \\
		\hline
		\multicolumn{6}{| c |}{$Z(G) \cong C_{p^2}\times C_{p}$}\\
		\hline
		\vtop{\hbox{\strut $\phi_{2}(221)b$, } \hbox{\strut $\phi_{2}(2111)d$ }} & $p^2$ & $\{ 1, p, p^2, p^3 \}$ & \vtop{\hbox{\strut $\phi_{2}(32)a_2$, } \hbox{\strut $\phi_{2}(311)c$ }} & $p^3$ & $\{ 1, p, p^2, p^3 \}$\\
		\hline
		\vtop{\hbox{\strut $\phi_{2}(311)a$, } \hbox{\strut $\phi_{2}(32)a_1$, } \hbox{\strut $\phi_{2}(221)c$ }} & $p^2$ & $\{ 1, p, p^2, p^3 \}$ & \vtop{\hbox{\strut $\phi_{2}(2111)b$ }} & $p$ & $\{ 1, p, p^3 \}$\\
		\hline
		\multicolumn{6}{| c |}{$Z(G) \cong C_{p}^{3}$}\\
		\hline
		\vtop{\hbox{\strut $\phi_{2}(221)a$, } \hbox{\strut $\phi_{2}(2111)c$, } \hbox{\strut $\phi_{2}(221)d$ }} & $p^2$ & $\{ 1, p, p^2 \}$ & \vtop{\hbox{\strut $\phi_{2}(2111)a$, } \hbox{\strut $\phi_{2}(1^5)$ }} & $p$ & $\{ 1, p, p^2 \}$\\
		\hline
		\multicolumn{6}{| c |}{$Z(G) \cong C_{p}$}\\
		\hline
		\vtop{\hbox{\strut $\phi_{5}(2111)$, } \hbox{\strut $\phi_{5}(1^5)$ }} & $p$ & $\{ 1, p, p^3 \}$ & - & - & - \\
		\hline
	\end{longtable}

	\begin{lemma} \label{lemma:p5 cl3 cdG1,p}
		Let $G$ be a non-abelian group of order $p^5$ such that $\cl(G) \geq 3$ and $\cd(G) = \{ 1, p \}$. 
		\begin{enumerate}
			\item If $Z(G)$ is non-cyclic, then $\cod(G) = \{ 1, p, p^2, p^3 \}$.
			\item If $Z(G)$ is cyclic and
			\begin{enumerate}
				\item $\cl(G) = 3$, then $|\cod(G)|=4$ and $1, p, p^4 \in \cod(G)$. Further, if $G/G'$ is not elementary abelian, then $\cod(G) = \{ 1, p, p^2, p^4 \}$.
				\item $\cl(G) = 4$, then $\cod(G) = \{ 1, p, p^2, p^3, p^4 \}$.
			\end{enumerate} 
		\end{enumerate}
	\end{lemma}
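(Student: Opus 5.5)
The plan is to analyse codegrees of nonlinear characters directly. The linear part comes from Corollary~\ref{cor:linearcod}.

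Since $\cd(G)=\{1,p\}$, every $\chi\in\nl(G)$ has $\cod(\chi)=|G:\ker(\chi)|/p$. Moreover $\bar G=G/\ker(\chi)$ is non-abelian with a faithful irreducible character, so $Z(\bar G)$ is cyclic and $|\bar G|\ge p^3$. Hence $p^2\le \cod(\chi)\le p^4$, and $\cod(\chi)=p^4$ exactly when $\chi$ is faithful, which forces $Z(G)$ to be cyclic.

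Conversely, codegrees are preserved under inflation, so each quotient of $G$ contributes its codegrees to $\cod(G)$. Every non-abelian $p$-group has a non-abelian quotient of order $p^3$, which is extraspecial. By Corollary~\ref{cor:extraspecial p-group} this gives $p^2\in\cod(G)$ in all cases.

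On the linear side, $\cl(G)\ge 3$ gives $|G/G'|\le p^3$ with $G/G'$ non-cyclic, so $\exp(G/G')\le p^2$. Corollary~\ref{cor:linearcod} then gives $\cod_{\lin}(G)\subseteq\{1,p,p^2\}$, containing $1$ and $p$.

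For part (1), $Z(G)$ is non-cyclic, so by the above there is no faithful irreducible character and $\cod(G)\subseteq\{1,p,p^2,p^3\}$. It remains to produce $p^3$. Let $c=\cl(G)$, so $\gamma_c(G)\le Z(G)$. Since $Z(G)$ is non-cyclic, I would choose $N\le Z(G)$ of order $p$ with $\gamma_c(G)\nleq N$; this is possible because $Z(G)$ has more than one subgroup of order $p$. Then $G/N$ has order $p^4$ and class $c\ge 3$, so it is of maximal class. Theorem~\ref{thm:nonabelianlessthanp5}(2)(ii) gives $p^3\in\cod(G/N)\subseteq\cod(G)$, which proves (1).

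For part (2), $Z(G)$ is cyclic. By a standard result $G$ then has a faithful irreducible character, and since $G$ is non-abelian it is nonlinear. This gives $p^4\in\cod(G)$, and we already have $1,p,p^2$.
\begin{itemize}
\item If $\cl(G)=4$, then $G/Z(G)$ has order $p^4$ and class $3$ (since $|Z(G)|=p$ for maximal class). Theorem~\ref{thm:nonabelianlessthanp5} yields $p^3$, so $\cod(G)=\{1,p,p^2,p^3,p^4\}$.
\item If $\cl(G)=3$, we must show $p^3\notin\cod(G)$. Linearly this is already excluded by $\exp(G/G')\le p^2$. A nonlinear $\chi$ with $\cod(\chi)=p^3$ would have $|\ker(\chi)|=p$. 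Then $\ker(\chi)$ is normal of order $p$, hence central, hence equal to the unique subgroup $\Omega_1(Z(G))$ of order $p$. It would also require $G/\Omega_1(Z(G))$ to have cyclic center.
\end{itemize}

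So the main step is to show that $Z\bigl(G/\Omega_1(Z(G))\bigr)$ is non-cyclic when $\cl(G)=3$ and $Z(G)$ is cyclic. I would try a general argument first. If this center were cyclic, then $G/\Omega_1(Z(G))$, being non-abelian of order $p^4$ with a faithful character of degree $p$, would be by Theorem~\ref{thm:nonabelianlessthanp5} either of maximal class or a $\Phi_2$ group with cyclic center. Comparing with $\cl(G)=3$ and $|G'|$ should give a contradiction. If that fails, I would fall back on the explicit presentations in \cite{J1980} of the groups of classes $\Phi_3$ and $\Phi_6$ with cyclic center, and check that $Z(G)\Omega$-quotients have center $C_p\times C_p$.

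With that in hand, $\cod(G)=\{1,p,p^2,p^4\}$, which has size $4$, contains $1,p,p^4$, and gives in particular the stated formula when $G/G'$ is not elementary abelian.
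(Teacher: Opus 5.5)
\paragraph{Verdict.} Your treatment of (1) and (2)(b) is sound, but (2)(a) has a genuine gap: the key step is announced rather than proved. Your route for (1) and (2)(b) differs from the paper's. For (1), the paper combines $\cod(G)\subseteq\{1,p,p^2,p^3\}$ with \cite[Theorem 1.2]{DL2016}, and for (2)(b) it cites \cite[Theorem A]{CL2020 maximal}. You instead pass to a central quotient of order $p^4$ and class $3$ and apply Theorem \ref{thm:nonabelianlessthanp5}(2)(ii).

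\paragraph{The missing step in (2)(a).} In (2)(a) you aim for the stronger claim $\cod(G)=\{1,p,p^2,p^4\}$. Everything rests on showing that $Z\bigl(G/\Omega_1(Z(G))\bigr)$ is non-cyclic, and you leave this open. ``Comparing with $\cl(G)=3$ and $|G'|$'' does not settle it. Two things are needed.
\begin{enumerate}
\item You must rule out that $G/\Omega_1(Z(G))$ has maximal class, i.e.\ show $|\gamma_3(G)|=p$. If $|\gamma_3(G)|\ge p^2$, then $|G:G'|=p^2$ and $G/G'\cong C_p\times C_p$. Bilinearity of $G/G'\times G'/\gamma_3(G)\to\gamma_3(G)$ then makes $\gamma_3(G)$ elementary abelian of rank $2$ inside the cyclic group $Z(G)$, a contradiction. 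Hence $\Omega_1(Z(G))=\gamma_3(G)$, $|G'|=p^2$, and $G/\gamma_3(G)\in\Phi_2$.
\item You must show that $Z(G/\gamma_3(G))$ contains two distinct subgroups of order $p$: $G'/\gamma_3(G)$ and the order-$p$ subgroup of $Z(G)/\gamma_3(G)$. They are distinct because $G'\nleq Z(G)$. This requires $|Z(G)|\ge p^2$, which you must import, e.g.\ from \cite{J1980}: such $G$ lie in $\Phi_3$, where $|Z(G)|=p^2$.
\end{enumerate}
None of this is in the proposal, so (2)(a) is unproved as written. The paper avoids the question entirely and never decides between $p^2$ and $p^3$. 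It gets $|\cod(G)|\ge 4$ from \cite[Theorem 1.2]{DL2016}, and rules out $\cod(G)=\{1,p,p^2,p^3,p^4\}$ via \cite[Theorem A]{CL2020 maximal}, since the class is neither $2$ nor $4$.

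\paragraph{The false claim used for $p^2$.} You assert that every non-abelian $p$-group has a non-abelian quotient of order $p^3$. This is false. In an extraspecial group of order $p^5$, $Z(G)=G'$ is the unique minimal normal subgroup, so every proper quotient is abelian. Likewise $\phi_2(211)b$ has $\cod=\{1,p,p^3\}$ (Table \ref{t:p34}), so ``$p^2\in\cod(G)$ in all cases'' fails for general non-abelian $p$-groups.

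In (1) and (2)(b) this does no harm, because $\cod(G/N)$, respectively $\cod(G/Z(G))$, already equals $\{1,p,p^2,p^3\}$. In (2)(a) you need another source for $p^2$. Once $p^3$ is excluded, this is easy. $G/\Omega_1(Z(G))$ is non-abelian since $|G'|\ge p^2$, so it has a nonlinear irreducible character. Its inflation is a non-faithful nonlinear $\chi$, so $\cod(\chi)\in\{p^2,p^3\}$ by your own bounds, and hence $\cod(\chi)=p^2$.
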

	\begin{proof}
		Since $G$ is nontrivial, $1,p \in \cod(G)$. Also, $\cd(G) = \{1, p\}$ and $\cl(G) \geq 3$ implies that $G\in \Phi_{3}\cup\Phi_6\cup\Phi_9$ (see \cite{J1980}). For all such $G$, $\exp(G) \leq p^3$, which implies that $\exp(G/G') \leq p^3$. Thus, by Corollary \ref{cor:linearcod}, $\cod(\eta) \leq p^3$, for all $\eta\in \lin(G)$. Now, let $\chi$ be a nonlinear irreducible character of $G$. If   $|G/\ker(\chi)| \leq p^2$, then $G/\ker(\chi)$ is abelian and hence $G' \subseteq \ker(\chi)$, which is not possible. Hence, $|G/\ker(\chi)|\geq p^3$, and thus, $|\ker(\chi)| \leq p^2$. \\
		First, assume that $Z(G)$ is non-cyclic. Since $\chi$ cannot not be faithful, $p \leq |\ker(\chi)| \leq p^2$, which implies that $p^2 \leq \cod(\chi) \leq p^3$. By \cite[Theorem 1.2]{DL2016}, $\cl(G) \geq 3$ implies that $|\cod(G)| > 3$.  Thus, we must have $\cod(G) = \{ 1, p, p^2, p^3 \}$ (since 1 and $p$ are already in $\cod(G)$).\\
		Now, suppose $Z(G)$ is cyclic. If $\cl(G) = 4$, then $G$ is a maximal class $p$-group with $\cd(G) = \{ 1, p \}$. By \cite[Theorem A]{CL2020 maximal}, $\cod(G) = \{ 1, p, p^2, p^3, p^4 \}$.
		Now, suppose $\cl(G) = 3$. Since $Z(G)$ is cyclic, there exists a faithful irreducible character, say $\delta$, of $G$. Then $\cod(\delta) = p^4$. Again, since $1,p \in \cod(G)$, we must have $|\cod(G)| > 3$ (from \cite[Theorem 1.2]{DL2016}). 
		By \cite[Theorem A]{CL2020 maximal}, $\cod(G) \neq \{ 1,p,p^2,p^3,p^4 \}$, that is, $|\cod(G)|<5$. Hence, $|\cod(G)| = 4$. Further, if $G/G'$ is not elementary abelian, then from \cite[Corollary 2.5]{DL2016}, $p^2\in \cod(G)$. Therefore, $\cod(G) = \{ 1, p, p^2, p^4 \}$. This completes the proof.
	\end{proof}
	
	\begin{lemma} \label{lemma:p5 cl3 cdG1,p,p^2}
		Let $G$ be a non-abelian group of order $p^5$ such that $\cl(G) \geq 3$ and $\cd(G) = \{ 1, p, p^2 \}$. Then $\cod(G) = \{ 1, p, p^2, p^3 \}$.
	\end{lemma}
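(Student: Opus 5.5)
The plan is to show that $\cod(G) \subseteq \{1, p, p^2, p^3\}$, and then to use the lower bound of \cite[Theorem 1.2]{DL2016} to force equality, exactly as in the proof of Lemma \ref{lemma:p5 cl3 cdG1,p}. By \cite{J1980}, the hypotheses $\cl(G) \geq 3$ and $\cd(G) = \{1,p,p^2\}$ place $G$ in $\Phi_7 \cup \Phi_8 \cup \Phi_{10}$. From the tables there I would record two facts for these families: $|Z(G)| = p$, and $|G/G'| \leq p^3$. Since $G$ is non-abelian, $|G'| \geq p$, so $|G/G'| \leq p^4$ in any case. Then Corollary \ref{cor:linearcod} gives $\cod_{\lin}(G) = \{1, p, \ldots, \exp(G/G')\}$, and this is contained in $\{1,p,p^2,p^3\}$ whenever $\exp(G/G') \leq p^3$. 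That holds for every group in these families, because none has cyclic abelianization of order $p^4$.

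Next I would handle $\chi \in \nl(G)$ with $\chi(1) = p^2$. Put $K = \ker(\chi)$ and $\bar G = G/K$. Then $\chi$ is a faithful character of $\bar G$, so $Z(\bar G)$ is nontrivial and $\chi(1)^2 \leq |\bar G : Z(\bar G)| < |\bar G|$. Hence $|\bar G| \geq p^5$, so $K = 1$ and $\cod(\chi) = p^5/p^2 = p^3$.

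For $\chi(1) = p$, the quotient $G/\ker(\chi)$ is non-abelian, so it has order at least $p^3$, and $\cod(\chi) \in \{p^2, p^3, p^4\}$. The main obstacle is excluding $\cod(\chi) = p^4$, that is, a faithful irreducible character of degree $p$. I would argue as follows.
\begin{enumerate}
\item Characters of $p$-groups are monomial, so $\chi = \lambda^G$ for some linear $\lambda$ of a subgroup $A$ of index $p$; since $p$ is odd, $A$ is normal in $G$.
\item Faithfulness of $\chi$ means $\Core_G(\ker\lambda) = 1$. Because $A \trianglelefteq G$ and $A' \leq \ker\lambda$, the subgroup $A'$ is normal and contained in $\ker\lambda$, so $A' \leq \Core_G(\ker\lambda) = 1$ and $A$ is abelian.
\item A group with an abelian normal subgroup of index $p$ has all character degrees dividing $p$, by Ito's theorem. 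This gives $\cd(G) \subseteq \{1,p\}$, contradicting $p^2 \in \cd(G)$.
\end{enumerate}
This argument does not actually need $|Z(G)| = p$, which makes it robust. Thus every degree-$p$ character has codegree $p^2$ or $p^3$, and $\cod(G) \subseteq \{1,p,p^2,p^3\}$.

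Finally, $1, p \in \cod(G)$ trivially, and $p^3 \in \cod(G)$ because a character of degree $p^2$ exists and has codegree $p^3$ by the second paragraph. Since $\cl(G) \geq 3$, \cite[Theorem 1.2]{DL2016} gives $|\cod(G)| \geq 4$, which forces $p^2 \in \cod(G)$. Hence $\cod(G) = \{1,p,p^2,p^3\}$. The only table-dependent input is the bound $\exp(G/G') \leq p^3$ for $\Phi_7$, $\Phi_8$ and $\Phi_{10}$. If I wanted to avoid it, I would note that these groups have $|G/G'| \leq p^3$; failing that, I would read $\exp(G) \leq p^3$ off the presentations in \cite{J1980}.
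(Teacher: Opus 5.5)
Your proof is correct and has the same skeleton as the paper's: bound every codegree by $p^3$, then use \cite[Theorem 1.2]{DL2016} to get $|\cod(G)|\geq 4$ and force equality. The real difference is how you rule out a faithful character of degree $p$.

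\begin{itemize}
\item \textbf{Faithful degree-$p$ case.} The paper just cites \cite[Lemma 22]{PU2023}, which says a faithful irreducible character of such a $G$ has degree $p^2$. You prove this directly: write $\chi=\lambda^G$ with $|G:A|=p$; the identity $\ker(\lambda^G)=\Core_G(\ker\lambda)$ forces $A'=1$; then Ito's theorem gives $\cd(G)\subseteq\{1,p\}$, a contradiction.
\item \textbf{Degree-$p^2$ case.} You show every character of degree $p^2$ is faithful, via $\chi(1)^2\leq|\bar G:Z(\bar G)|<|\bar G|$. This exhibits $p^3\in\cod(G)$ explicitly. It also avoids the paper's case of a non-faithful degree-$p^2$ character, which the paper says has codegree $p^2$ or $p$ but which cannot occur.
\end{itemize}

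Your argument uses only $|G|=p^5$, $\cl(G)\geq 3$ and $p^2\in\cd(G)$. So it is self-contained and needs no identification of the families $\Phi_7,\Phi_8,\Phi_{10}$ from \cite{J1980}. The paper's version is shorter but rests on an external lemma.

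Two small remarks.
\begin{itemize}
\item The bound $\exp(G/G')\leq p^3$ is not table-dependent. $G/G'$ is never cyclic for non-abelian $G$; alternatively, class $\geq 3$ forces $|G'|\geq p^2$.
\item ``Since $p$ is odd'' is unnecessary: every subgroup of index $p$ in a $p$-group is normal.
\end{itemize}
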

	\begin{proof}
		Since $G$ is nontrivial, $1, p\in \cod(G)$. Also, $\cd(G) = \{1, p, p^2\}$ and $\cl(G) \geq 3$ implies that $G\in \Phi_{7}\cup\Phi_8\cup\Phi_{10}$ (see \cite{J1980}). For all such $G$, $\cl(G) \geq 3$ implies that $\exp(G/G') \leq p^3$. Thus, by Corollary \ref{cor:linearcod}, $\cod(\eta) \leq p^3$, for all $\eta\in \lin(G)$.
		Let $\chi$ be a nonlinear irreducible character of degree $p$ in $G$. Then, $|G/\ker(\chi)|\geq p^3$, and thus, $|\ker(\chi)| \leq p^2$. If $\chi$ is not faithful, then $|\ker(\chi)|= p$, or $p^2$. If $\chi(1) = p$, then $\cod(\chi) = p^3$, or $p^2$. Otherwise, if $\chi(1) = p^2$, then $\cod(\chi) = p^2$, or $p$. On the other hand, suppose $\chi$ is faithful. By \cite[Lemma 22]{PU2023}, $\chi(1) = p^2$. Then $\cod(\chi) = |G|/\chi(1) = p^3$. Thus, $\cod(\chi) \leq p^3$ for all $\chi \in \nl(G)$. Since $1, p\in \cod(G)$ and $\cl(G) \geq 3$, we get $|\cod(G)| = 4$ by \cite[Theorem 1.2]{DL2016}.
		Therefore, $\cod(G) = \{ 1, p, p^2, p^3 \}$.
	\end{proof}
	
To deal with the groups with nilpotency class greater than 2, we obtain Lemma \ref{lemma:p5 cl3 cdG1,p} and \ref{lemma:p5 cl3 cdG1,p,p^2}.

	\begin{lemma} \label{lemma:groupsp5 phi3}
		Let  $G$ be a group of order $p^5$ belonging to the family $\Phi_{3}$. Then $\cod(G) = \{ 1, p, p^2, p^3 \}$, or $\{ 1, p, p^2, p^4 \}$.
	\end{lemma}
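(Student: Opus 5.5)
The plan is to reduce the statement to Lemma \ref{lemma:p5 cl3 cdG1,p}. From \cite{J1980}, every $G\in\Phi_3$ of order $p^5$ has $\cl(G)=3$ and $\cd(G)=\{1,p\}$, so that lemma applies directly. We then split according to whether $Z(G)$ is cyclic. For each group in $\Phi_3$ we read off from Jones' presentations that $G/Z(G)$ is isomorphic to a group of class $3$ and order $p^4$ in $\Phi_3$, so $|Z(G)|=p$ or $p^2$. When $|Z(G)|=p$ the center is trivially cyclic. When $|Z(G)|=p^2$ we have to determine whether $Z(G)\cong C_{p^2}$ or $C_p\times C_p$.

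\emph{Case 1: $Z(G)$ non-cyclic.} Lemma \ref{lemma:p5 cl3 cdG1,p}(1) gives $\cod(G)=\{1,p,p^2,p^3\}$ immediately.

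\emph{Case 2: $Z(G)$ cyclic.} Here $\cl(G)=3$, so Lemma \ref{lemma:p5 cl3 cdG1,p}(2)(a) gives $|\cod(G)|=4$ with $1,p,p^4\in\cod(G)$. If $G/G'$ is not elementary abelian, the same lemma yields $\cod(G)=\{1,p,p^2,p^4\}$. So the only remaining subcase is $Z(G)$ cyclic and $G/G'$ elementary abelian. In that subcase we need to show that the fourth codegree is $p^2$ rather than $p^3$. Since $\exp(G/G')=p$, the linear characters only contribute $1$ and $p$, so the fourth codegree comes from a nonlinear $\chi$ with $\chi(1)=p$ and $|\ker\chi|\in\{p,p^2\}$. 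Such a $\chi$ is non-faithful and factors through $G/\ker\chi$, which has order $p^4$ or $p^3$.
\begin{itemize}
\item If $|\ker\chi|=p$, then $\ker\chi$ is a minimal normal subgroup and hence lies in $Z(G)$. Since $Z(G)$ is cyclic, $\ker\chi=\Omega_1(Z(G))$, and $\cod(\chi)=p^3$ occurs exactly when $G/\Omega_1(Z(G))$ has a faithful irreducible character of degree $p$.
\item If $|\ker\chi|=p^2$, then $G/\ker\chi$ is non-abelian of order $p^3$, and $\cod(\chi)=p^2$.
\end{itemize}
The plan is to exhibit a normal subgroup $K$ of order $p^2$ with $G/K$ extraspecial of order $p^3$. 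A natural choice is $K=\gamma_3(G)Z(G)$-type subgroups of order $p^2$ containing $\gamma_3(G)$, since $G/\gamma_3(G)$ has class $2$ and non-cyclic center. Any non-faithful character of that quotient with kernel of order $p$ modulo $\gamma_3(G)$ then gives codegree $p^2$, forcing $\cod(G)=\{1,p,p^2,p^4\}$.

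The main obstacle is this last subcase, and more precisely checking that codegrees $p^2$ and $p^3$ cannot both fail or both occur in a way that contradicts $|\cod(G)|=4$. Cardinality already excludes both occurring, so it suffices to produce one witness. If the general argument stalls, I would fall back on Jones' list: for each of the few groups in $\Phi_3$ with cyclic center and elementary abelian $G/G'$, I would write down the quotient by a suitable order-$p^2$ normal subgroup containing $\gamma_3(G)$, verify that it is non-abelian of order $p^3$, and conclude that $p^2\in\cod(G)$. Either way, every group in $\Phi_3$ lands in one of the two sets claimed.
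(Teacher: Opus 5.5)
Your reduction to Lemma \ref{lemma:p5 cl3 cdG1,p} and your first two cases match the paper's proof; the routes differ only in the last subcase (cyclic centre and $G/G'$ elementary abelian, which in \cite{J1980} is only $\phi_3(2111)c$). There the paper works directly with the characters induced from the maximal abelian subgroup $A=\langle\alpha_1,\alpha_2,\gamma\rangle$ via \cite[Theorem 2]{BKP2013}. It exhibits the kernel $\langle\gamma\rangle=Z(G)$, which gives codegree $p^2$, and then rules out $p^3$ directly; that second step is redundant once $|\cod(G)|=4$ is known. You instead use $|\cod(G)|=4$ so that only one witness for $p^2$ is needed: a normal $K$ of order $p^2$ with $G/K$ non-abelian of order $p^3$. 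Inflating a degree-$p$ character of $G/K$ (necessarily faithful there) gives codegree $p^5/(p^2\cdot p)=p^2$. This is shorter and avoids the induced-character and core computations. The paper's route yields more (the possible kernels), but the statement does not need it.

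One factual claim is wrong, and correcting it completes your argument at once. $G/Z(G)$ is not a class-$3$ group of order $p^4$ in $\Phi_3$; that would force $\cl(G)=4$. Rather, $G$ is \emph{isoclinic} to such a group, and the common central quotient of the family $\Phi_3$ is $\phi_2(111)$, of order $p^3$. So $|Z(G)|=p^2$ for every $G\in\Phi_3$ of order $p^5$, and your case $|Z(G)|=p$ never occurs. Since $\gamma_3(G)\le Z(G)$, your ``natural choice'' $\gamma_3(G)Z(G)$ is just $Z(G)$, and $G/Z(G)$ is non-abelian of order $p^3$. Hence $p^2\in\cod(G)$ for \emph{every} $G\in\Phi_3$. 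You therefore need neither the unproved assertion that $Z(G/\gamma_3(G))$ is non-cyclic (it is true, but unnecessary), nor the split on whether $G/G'$ is elementary abelian, nor the fallback to the explicit presentations. Combined with Lemma \ref{lemma:p5 cl3 cdG1,p}, this gives $\{1,p,p^2,p^3\}$ for non-cyclic centre and $\{1,p,p^2,p^4\}$ for cyclic centre.
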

	\begin{proof}
		For $G\in \Phi_{3}$, $\cl(G) = 3$, $Z(G) \cong C_{p^2}$, or $C_{p}\times C_{p}$, and $\cd(G) = \{ 1, p \}$ (see \cite{J1980}). If the center is non-cyclic, then $\cod(G) = \{ 1, p, p^2, p^3 \}$ from Lemma \ref{lemma:p5 cl3 cdG1,p}. If the center is cyclic, then we have the following groups: $\phi_{3}(311)a$, $\phi_{3}(311)b_r$ $(r=1,\nu)$, and $\phi_3(2111)c$.
		It is easy to check that if $G = \phi_{3}(311)a$, or $\phi_{3}(311)b_r$ $(r=1,\nu)$, then $G/G'$ is not elementary abelian. By Lemma \ref{lemma:p5 cl3 cdG1,p}, we get that $\cod(G) = \{ 1,p,p^2, p^4 \}$.\\ Now, we are left with 
		\[ G = \phi_3(2111)c = \langle \alpha, \alpha_1, \alpha_2, \alpha_3, \gamma ~|~ [\alpha_1, \alpha] = \alpha_2, [\alpha_{2}, \alpha] = \gamma^{p} = \alpha_3, \alpha^{p} = \alpha_{i}^p  = 1 ~(i =1,2,3) \rangle. \]
		Here $G' = \langle \alpha_{2}, \gamma^p \rangle \cong C_p \times C_p$, $G/G' \cong C_{p}^3$ and $Z(G) = \langle \gamma \rangle \cong C_{p^2}$. From Corollary \ref{cor:linearcod}, $\cod(\eta) = p$ if $1 \neq \eta \in \lin(G)$. We now compute $\cod_{\nl}(G)$.
		Let $\chi \in \nl(G)$. If $\chi$ is faithful (existence of such an irreducible character is ensured by the cyclic center), then $\cod(\chi) = p^4$. Now, assume that $\chi$ is not faithful. Take $A = \langle \alpha_{1}, \alpha_{2}, \gamma \rangle \cong C_p \times C_{p} \times C_{p^2}$. Then $A$ is a maximal abelian subgroup of $G$ containing $G'$. By \cite[Theorem 2]{BKP2013}, $\chi = \lambda\ind_{A}^{G}$ for some $\lambda\in \lin(A)$ with $\ker(\lambda) = D$ where $D\leq A$ such that $A/D$ is cyclic. Note that $\ker(\chi) = \Core_{G}(\ker(\lambda))$. 
		If we take $ D =  \langle \alpha_{1}, \gamma \rangle$, then $A/D$ is cyclic. Here, $\Core_{G}(D) = \langle \gamma \rangle$. Thus, if we take $D = \ker(\lambda)$, then  $\ker(\chi) = \Core_{G}(\ker(\lambda)) = \langle \gamma \rangle$. Hence, $|\ker(\chi)| = p^2$ and thus, $\cod(G) = p^2$. Now, we prove that for any $\chi\in \nl(G)$, $\cod(\chi) \neq p^3$, that is, $|\ker(\chi)| \neq p$. Since $\exp(A) = p^2$ and $A/\ker(\lambda)$ is cyclic, we get $|A/\ker(\lambda)|\leq p^2$. Hence, $|\ker(\lambda)| \geq p^2$. On the other hand, $|\ker(\lambda)| < p^4$, otherwise $\ker(\lambda) = A$ and hence $G' \subset \Core_{G}(\ker(\lambda))$ which is a contradiction. Therefore, $|\ker(\lambda)| = p^2$, or $p^3$. Now, if $|D| = p^2$, then $A/D \cong C_{p^2}$. By \cite[Lemma 4.2]{O'BPU2024}, $\langle \gamma \rangle \cap D = 1$, that is, $Z(G) \cap D = 1$. This implies that $Z(G) \cap \Core_{G}(D) = 1$, and hence, $\Core_{G}(D) = 1$. Hence, $\chi$ is faithful, which is a contradiction. Thus, if $\chi$ is not faithful, then $|\ker(\lambda)|$ must be $p^3$. \\
		Suppose $|D| = p^3$. If $D\cong C_{p^2} \times C_{p}$, then $D \in \{ T_{ij}, T_k ~:~ 1\leq i,j,k \leq p \}$, where $T_{ij} = \langle \gamma\alpha_{1}^i, \alpha_{1}^j\alpha_{2}  \rangle \cong C_{p^2} \times C_p$ and $T_k = \langle \gamma\alpha_{2}^k, \alpha_{1} \rangle \cong C_{p^2} \times C_p$ for $1\leq i,j,k \leq p$. Through routine computation, we obtain that $T_{ij}$ is normal in $G$ for $i=p, 1\leq j \leq p$, and $\Core_{G}(T_{ij}) \cong C_{p^2}$ when $1\leq i<p, 1\leq j \leq p$. Similarly, $\Core_{G}(T_{k}) \cong C_{p^2}$ for all $1\leq k \leq p$.
		On the other hand, if $D \cong C_p^3$, then $D = \langle \alpha_1, \alpha_{2}, \gamma^p \rangle$, and hence $G' \subset D$. This implies that $G' \subset \Core_{G}(D)$, and thus, $D$ is not a suitable choice for $\ker(\lambda)$.
		 Hence, when $|D| = p^3$, then $D $ has to belong to the set $ \{ T_{ij}, T_k ~:~ 1\leq i <p, 1\leq j,k \leq p \}$, to be a candidate for $\ker(\lambda)$. Therefore, $\ker(\chi) = \Core_{G}(\ker(\lambda)) \cong C_{p^2}$, and hence, $\cod(\chi) = p^2$. Therefore, $\cod(G) = \{ 1, p, p^2, p^4 \}$.
	\end{proof}
	
	\begin{lemma} \label{lemma:groupsp5 phi4}
		Let  $G$ be a group of order $p^5$ belonging to the family $\Phi_{4}$. Then $\cod(G) = \{ 1, p, p^2 \}$, $\{ 1, p, p^3 \}$ or $\{ 1, p, p^2, p^3 \}$.
	\end{lemma}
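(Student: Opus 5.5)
For $G\in\Phi_4$ we know from \cite{J1980} that $\cl(G)=2$, $|G'|=p^2$ with $G'\cong C_p\times C_p$, $|Z(G)|=p^2$ and $\cd(G)=\{1,p\}$. The plan is to handle linear and nonlinear characters separately and then combine them.

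\textbf{Linear characters.} These give $\cod_{\lin}(G)=\{p^i : 0\le i\le \log_p(\exp(G/G'))\}$ by Corollary \ref{cor:linearcod}. Since $\cl(G)=2$, $|G/Z(G)|=p^3$ and $G/G'$ has order $p^3$, we get $\exp(G/G')\in\{p,p^2\}$. Hence $\cod_{\lin}(G)$ is $\{1,p\}$ or $\{1,p,p^2\}$.

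\textbf{Nonlinear characters.} Here $\chi(1)=p$, so $\cod(\chi)=|G/\ker(\chi)|/p$. We show $|\ker(\chi)|\in\{p,p^2\}$.
\begin{itemize}
\item Since $G/\ker(\chi)$ is nonabelian, $|G/\ker(\chi)|\ge p^3$, so $|\ker(\chi)|\le p^2$.
\item The character $\chi$ cannot be faithful, because a faithful irreducible character forces $Z(G)$ to be cyclic. Here $Z(G)\supseteq G'\cong C_p\times C_p$ and $|Z(G)|=p^2$, so $Z(G)=G'$ is elementary abelian of rank $2$.
\end{itemize}
Thus $\cod(\chi)\in\{p^2,p^3\}$. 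Moreover $\ker(\chi)\cap Z(G)\neq 1$ while $G'\not\le\ker(\chi)$.

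\textbf{Realising codegree $p^3$.} This value always occurs. Choose a subgroup $N$ of order $p$ in $G'=Z(G)$; it is normal because it is central. Then $G/N$ has order $p^4$ and is nonabelian since $G'\not\le N$, so it has a nonlinear irreducible character $\psi$ of degree $p$. Its kernel $K/N$ need not be trivial, which is the delicate point. The plan is to argue that the kernel of a nonlinear character of $G/N$ meets the center of $G/N$ trivially or with order at most $p$. Hence at least one of $p^2,p^3$ appears, and it remains to decide which.

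\textbf{Finishing via the centre of $G/N$.} Since $|\ker(\chi)|\le p^2$ and $\ker(\chi)$ cannot contain $G'$, either $\ker(\chi)$ is a central subgroup of order $p$, giving codegree $p^3$, or $\ker(\chi)$ has order $p^2$ and is not central, giving codegree $p^2$. Combining with the linear part, the union lies in $\{1,p\}\cup\{p^2\}\cup\{p^2,p^3\}$ and contains at least one nonlinear value. This leaves exactly the three listed possibilities, with $\{1,p\}$ alone excluded because $\cod_{\nl}(G)\neq\emptyset$.

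\textbf{Main obstacle.} The hard step is not the upper bound but showing the three listed sets are all the outcomes. In fact the inclusion argument already shows $\cod(G)\subseteq\{1,p,p^2,p^3\}$ and $\{1,p\}\subsetneq\cod(G)$, which is precisely the statement. For the table I would then determine, group by group in $\Phi_4$, whether a non-central normal subgroup $K$ of order $p^2$ with $G/K$ of maximal class order $p^3$ exists. Equivalently, I would ask whether $G/K$ is extraspecial with $K\cap G'=Z\cap K$ of order $p$. This identification is done from the presentations in \cite{J1980}.
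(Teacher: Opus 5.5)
Your proof is correct and proves the lemma as stated, by a different and much shorter route than the paper's. You use a uniform bound. You have $\{1,p\}\subseteq\cod_{\lin}(G)$, and every $\chi\in\nl(G)$ has degree $p$ with $p\le|\ker(\chi)|\le p^2$: $\chi$ is not faithful since $Z(G)=G'$ is non-cyclic, and $|G/\ker(\chi)|\ge p^3$ since $G/\ker(\chi)$ is non-abelian. Hence $\emptyset\neq\cod_{\nl}(G)\subseteq\{p^2,p^3\}$, and the three listed sets are exactly the possible unions. In fact $\exp(G/G')=p$, because $G/G'=G/Z(G)$ and $[x^p,y]=[x,y]^p=1$ in a class-$2$ group with $\exp(G')=p$. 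So $\cod_{\lin}(G)=\{1,p\}$, though your weaker bound already suffices.

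The paper instead goes group by group through $\Phi_4$. It writes each $\chi\in\nl(G)$ as $\lambda\ind_A^G$ for a maximal abelian normal subgroup $A\supseteq G'$ via \cite[Theorem 2]{BKP2013}, and computes $\ker(\chi)=\Core_G(\ker(\lambda))$ over the admissible $\ker(\lambda)$. This is much more laborious, but it pins down which set occurs for each group (Table \ref{t:non VZ groups}). It also shows that all three alternatives really occur: $\phi_4(221)c$, $\phi_4(221)f_0$ and $\phi_4(221)a$ give $\{1,p,p^2\}$, $\{1,p,p^3\}$ and $\{1,p,p^2,p^3\}$ respectively. Your argument gives only the inclusion, which is all the lemma literally asserts; it is the same bound the paper uses in part (1) of Lemma \ref{lemma:p5 cl3 cdG1,p}.

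Your closing plan for the table is muddled: the "maximal class/extraspecial" condition on $G/K$ is automatic, and it says nothing about $p^3$. Within your framework the clean criteria are these. First, $p^3\in\cod(G)$ if and only if $Z(G/N)$ is cyclic for some $N\le Z(G)$ of order $p$. Second, $p^2\in\cod(G)$ if and only if $G$ has a normal subgroup of order $p^2$ other than $Z(G)$; the quotient is then automatically non-abelian of order $p^3$.

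You should delete the assertion, in your paragraph on realising codegree $p^3$, that this value always occurs. It is false. In $\phi_4(1^5)$, for each $N\le Z(G)$ of order $p$, the centre of $G/N$ is elementary abelian of order $p^2$. Hence no irreducible character has kernel of order $p$, and $\cod(G)=\{1,p,p^2\}$; the same happens for $\phi_4(221)c$. The accompanying "plan" about kernels of characters of $G/N$ meeting $Z(G/N)$ in order at most $p$ is vacuous. Your final deduction uses neither; it needs only $\nl(G)\neq\emptyset$. So the proof stands once that paragraph is removed.
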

	\begin{proof}
		For $G\in \Phi_{4}$, $\cl(G) = 2$, $G' = Z(G) \cong C_{p}\times C_{p}$, $G/G' \cong C_{p}^3$, and $\cd(G) = \{ 1, p \}$ (see \cite{J1980}). From Corollary \ref{cor:linearcod}, $\cod(\eta) = p$ if $1 \neq \eta \in \lin(G)$. We now compute $\cod_{\nl}(G)$ for each group $G\in \Phi_4$. We deal with each group in the family separately. 
		We start with
		\[ G = \phi_4(221)a = \langle \alpha, \alpha_1, \alpha_2, \beta_1, \beta_2 ~|~ [\alpha_1, \alpha] = \beta_1 = \alpha_{1}^p, [\alpha_{2}, \alpha] =  \beta_2 = \alpha^p, \alpha_{2}^p = \beta_1^{p} = \beta_2^{p} = 1 \rangle. \]
		Here $G' = Z(G) = \langle \alpha^p, \alpha_{1}^p \rangle \cong C_{p}\times C_{p}$. Take $A = \langle \alpha^p, \alpha_{1}, \alpha_{2} \rangle \cong C_p \times C_{p^2} \times C_p$. Then $A$ is a maximal abelian subgroup of $G$ containing $G'$.  Let $\chi \in \nl(G)$. By \cite[Theorem 2]{BKP2013}, $\chi = \lambda\ind_{A}^{G}$ for some $\lambda\in \lin(A)$ with $\ker(\lambda) = D$ where $D\leq A$ such that $A/D$ is cyclic. 
		It is easy to see that, just as in the case of $\phi_{3}(2111)c$ in the proof of Lemma \ref{lemma:groupsp5 phi3}, $|\ker(\lambda)| = p^2$, or $p^3$.
		If $D = \langle \alpha_{1}, \alpha_{2} \rangle$, then $A/D$ is cyclic. In this case, $\Core_{G}(D) = \langle \alpha_1 \rangle$. Hence, if we take $D = \ker(\lambda)$, then  $|\ker(\chi)| =  |\Core_{G}(\ker(\lambda))| = p^2$ which implies that $\cod(\chi) = p^2$. Now, if $D = \langle \alpha^p, \alpha_{2} \rangle$, then $A/D$ is cyclic. Here, $\Core_{G}(D) = \langle \alpha^p \rangle$. Hence, if we take $D = \ker(\lambda)$, then $|\ker(\chi)| = p$ which implies that $\cod(\chi) = p^3$. Thus, $\cod_{\nl}(G) = \{ p^2, p^3 \}$, and finally, $\cod(G) = \{ 1, p, p^2, p^3 \}$. \\
		Using similar arguments, we conclude that $\cod(G) = \{1, p, p^2, p^3 \}$ if $ G \in \{ \phi_4(221)b, \phi_4(221)d_r ~ (r=1,2,\ldots, (p-1)/2), \phi_4(221)e, \phi_4(2111)b, \phi_4(2111)c \}$.\\
		Now, suppose
		\[ G = \phi_4(221)c = \langle \alpha, \alpha_1, \alpha_2, \beta_1, \beta_2 ~|~ [\alpha_1, \alpha] = \beta_1 = \alpha_{1}^p, [\alpha_{2}, \alpha] =  \beta_2 = \alpha_{2}^p, \alpha^p = \beta_1^{p} = \beta_2^{p} = 1 \rangle. \]
		Here $G' = Z(G) = \langle \alpha_1^p, \alpha_{2}^p \rangle \cong C_{p}\times C_{p}$. Take $B = \langle \alpha_1, \alpha_{2} \rangle \cong C_{p^2} \times C_{p^2}$. Then $B$ is a maximal abelian subgroup of $G$ containing $G'$. Let $\nu \in \nl(G)$. Again, by \cite[Theorem 2]{BKP2013}, $\nu = \theta\ind_{B}^{G}$ for some $\theta\in \lin(B)$ with $\ker(\theta) = E$ where $E\leq B$ such that $B/E$ is cyclic. If $E = \langle \alpha_1 \rangle$, then $B/E$ is cyclic and $\Core_{G}(E) = E$. If we take $E = \ker(\theta)$, then
		  $\ker(\nu) = \Core_{G}(E) = E$ which implies that $|\ker(\nu)| = p^2$. Hence, $\cod(\nu) = p^2$. Now, we prove that $\cod_{\nl}(G) = \{ p^2 \}$. 
		Just as in the case of $\phi_{3}(2111)c$ in the proof of Lemma \ref{lemma:groupsp5 phi3}, $p^2 \leq |\ker(\theta)|\leq  p^3$. If $|E| = p^3$, then owing to the fact that $\exp(A) = p^2$, we get $E\cong C_{p^2} \times C_{p}$. Then $E$ contains all the $p$-ordered elements of $A$, and hence, $G' = Z(G) \subset E$. Thus, $G'$ is contained in $\Core_{G}(E)$ as well, and hence, $E$ cannot be a candidate for $\ker(\theta)$. Therefore, $|\ker(\theta)| = p^2$. If $|E| = p^2$, then $E \ncong C_p \times C_p$ (otherwise, $G' = E$). Thus, $E \cong C_{p^2}$. By \cite[Theorem 4.2]{T2010}, total number of possibilities for such a subgroup $E$ is $p^2+ p$. In fact, $E \in \{ \langle \alpha_{1}\alpha_{2}^i \rangle, \langle \alpha_{1}^{pj}\alpha_{2} \rangle ~|~ 1 \leq i \leq p^2, 1\leq j \leq p \}$. Since $[\alpha_{1}, \alpha] = \alpha_{1}^p$ and $[\alpha_{2}, \alpha] = \alpha_{2}^p$, it is easy to check that in all the cases, $\Core_{G}(E) = E$, that is, $E$ is normal in $G$. Therefore, $|\ker(\nu)| = |\Core_{G}(\ker(\theta))| = p^2$. Hence, $\cod_{\nl}(G) = \{ p^2 \}$, and finally, $\cod(G) = \{ 1, p, p^2 \}$.\\ Using similar arguments, we conclude that $\cod(G) = \{1, p, p^2 \}$ if $ G = \phi_4(2111)a,$ or $\phi_4(1^5)$.\\
		Now, let 
		\[ G = \phi_4(221)f_0 = \langle \alpha, \alpha_1, \alpha_2, \beta_1, \beta_2 ~|~ [\alpha_1, \alpha] = \beta_1, [\alpha_{2}, \alpha] =  \beta_2 = \alpha_{1}^p,  \alpha_{2}^p = \beta_1^{\nu}, \alpha^p = \beta_1^{p} = \beta_2^{p} = 1 \rangle. \]
		Here $G' = Z(G) = \langle \alpha_1^p, \alpha_{2}^p \rangle \cong C_{p}\times C_{p}$. Take $T = \langle \alpha_1, \alpha_{2} \rangle \cong C_{p^2} \times C_{p^2}$. Then $T$ is a maximal abelian subgroup of $G$ containing $G'$. 
		Let $\phi \in \nl(G)$. Again, by \cite[Theorem 2]{BKP2013}, $\phi = \psi\ind_{T}^{G}$ for some $\psi\in \lin(T)$ with $\ker(\psi) = J$ where $J\leq T$ such that $T/J$ is cyclic. 
		If $J = \langle \alpha_1 \rangle$, then $T/J$ is cyclic and $\Core_{G}(J) = \langle \alpha_{1}^p \rangle$. If we take $J = \ker(\psi)$, then $\ker(\phi) = \Core_{G}(\ker(\psi)) = \langle \alpha_{1}^p \rangle$ which implies that $|\ker(\phi)| = p$. Hence, $\cod(\phi) = p^3$. Now, we prove that $\cod_{\nl}(G) = \{ p^3 \}$. Now, similar to the case of $\phi_4(221)c$, $J$ must be isomorphic to $C_{p^2}$. 
		Then $J$ belongs to the set $\{ \langle \alpha_{1}\alpha_{2}^i \rangle, \langle \alpha_{1}^{pj}\alpha_{2} \rangle ~|~ 1 \leq i \leq p^2, 1\leq j \leq p \}$. Since $[\alpha_{1}, \alpha] = \alpha_{1}^p$ and $[\alpha_{2}, \alpha] = \alpha_{2}^p$, it is easy to check that $\Core_{G}(J) \cong C_p$ in all the cases. That is, if $J = \ker(\psi)$, where $J$ belongs to the above-mentioned set, then $|\ker(\phi)| = p$, and thus, $\cod(\phi) = p^3$. Therefore, $\cod_{\nl}(G) = \{ p^3 \}$, and finally, $\cod(G) = \{ 1, p, p^3 \}$. 
		Using similar arguments, we conclude that $\cod(G) = \{1, p, p^3 \}$ if $G = \phi_4(221)f_r$ for $r = 1,2,\ldots, (p-1)/2$.\\
		Hence, $\cod(G) = \{ 1, p, p^2 \}$, $\{ 1, p, p^3 \}$ or $\{ 1, p, p^2, p^3 \}$ if $G\in \Phi_{4}$ with $|G| = p^5$.
	\end{proof}

	\begin{corollary} \label{lemma:groupsp5 6to10}
		Let $G$ be a group of order $p^5$ belonging to the family $\Phi_{i}$ for $i=6,7,8,10$. Then $\cod(G) = \{ 1, p, p^2, p^3 \}$.
	\end{corollary}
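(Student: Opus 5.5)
The plan is to split the families by character degree set and feed each part into Lemma \ref{lemma:p5 cl3 cdG1,p} or Lemma \ref{lemma:p5 cl3 cdG1,p,p^2}. From \cite{J1980}, every group in $\Phi_6$ has class $3$ and $\cd(G)=\{1,p\}$. Every group in $\Phi_7\cup\Phi_8\cup\Phi_{10}$ has class at least $3$ and $\cd(G)=\{1,p,p^2\}$.

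For $G\in\Phi_7\cup\Phi_8\cup\Phi_{10}$, Lemma \ref{lemma:p5 cl3 cdG1,p,p^2} applies directly. It gives $\cod(G)=\{1,p,p^2,p^3\}$, so there is nothing further to check for these families.

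For $G\in\Phi_6$ I would use Lemma \ref{lemma:p5 cl3 cdG1,p}(1). That lemma needs $Z(G)$ to be non-cyclic, so the main step is to verify this for every group in $\Phi_6$. From the data in \cite{J1980}, I expect each group in $\Phi_6$ to satisfy $G/Z(G)\cong\phi_2(1^3)\times C_p$ up to isoclinism type, with $Z(G)=\gamma_3(G)\cong C_p\times C_p$. One can confirm this from the standard presentations: $[\alpha_1,\alpha]=\beta$, $[\beta,\alpha]=\beta_1$, $[\beta,\alpha_1]=\beta_2$, with $Z(G)=\langle\beta_1,\beta_2\rangle$ of order $p^2$ and exponent $p$. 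Once $Z(G)\cong C_p^2$ is established, Lemma \ref{lemma:p5 cl3 cdG1,p}(1) gives $\cod(G)=\{1,p,p^2,p^3\}$.

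The main obstacle is this center computation for $\Phi_6$. If some group in $\Phi_6$ has $Z(G)$ cyclic, it would have a faithful irreducible character of degree $p$, giving $p^4\in\cod(G)$, and the corollary would fail. So I would check each listed group $\phi_6(221)a,\ldots,\phi_6(1^5)$ and confirm that $\beta_1$ and $\beta_2$ have order $p$ and generate $Z(G)$. An alternative is to observe that $Z(G)=\gamma_3(G)$ is generated by the two independent commutators $[\beta,\alpha]$ and $[\beta,\alpha_1]$, so it has rank $2$. Combining the two cases proves the corollary.
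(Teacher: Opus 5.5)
Your argument is correct and matches the paper's: $\Phi_6$ (where $\cd(G)=\{1,p\}$ and $Z(G)\cong C_p\times C_p$ by \cite{J1980}) is handled by part (1) of Lemma~\ref{lemma:p5 cl3 cdG1,p}, and $\Phi_7\cup\Phi_8\cup\Phi_{10}$ is handled by Lemma~\ref{lemma:p5 cl3 cdG1,p,p^2}; the paper's text misprints this union as $\Phi_6\cup\Phi_8\cup\Phi_{10}$. One harmless slip: since $|Z(G)|=p^2$ for $G\in\Phi_6$, the quotient $G/Z(G)$ has order $p^3$ and is the extraspecial group $\phi_2(1^3)$ itself, not $\phi_2(1^3)\times C_p$, as your own presentation with $Z(G)=\langle\beta_1,\beta_2\rangle$ shows.
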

	\begin{proof}
		If $G\in \Phi_6$, then $\cl(G) \geq 3$, $\cd(G) = \{ 1, p \}$ and $Z(G)\cong C_p \times C_p$ (see \cite{J1980}). By Lemma \ref{lemma:p5 cl3 cdG1,p}, $\cod(G) = \{ 1, p, p^2, p^3 \}$. Finally if $G\in \Phi_6 \cup \Phi_8 \cup \Phi_{10}$, then $\cl(G) \geq 3$ with $\cd(G) = \{ 1, p, p^2\}$. Then $\cod(G) = \{ 1, p, p^2, p^3 \}$ by Lemma \ref{lemma:p5 cl3 cdG1,p,p^2}.
	\end{proof}
	
	\begin{corollary} \label{lemma:groupsp5 phi9}
		Let $G$ be a group of order $p^5$ belonging to the family $\Phi_{9}$. Then $\cod(G) = \{ 1, p, p^2, p^3, p^4 \}$.
	\end{corollary}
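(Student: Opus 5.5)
The plan is to reduce this to Lemma \ref{lemma:p5 cl3 cdG1,p}, case 2(b). By the data of \cite{J1980} recalled at the start of this subsection, every $G \in \Phi_9$ has order $p^5$, is of maximal class (so $\cl(G) = 4$), and satisfies $\cd(G) = \{1, p\}$. It remains to check that $Z(G)$ is cyclic; then the lemma applies directly and gives $\cod(G) = \{1, p, p^2, p^3, p^4\}$.

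The center is cyclic for a general reason. A $p$-group of maximal class of order $p^n$ with $n \geq 3$ has $|Z(G)| = p$, because its lower central factors $\gamma_i(G)/\gamma_{i+1}(G)$ for $i \geq 2$ all have order $p$ and $Z(G) = \gamma_{n-1}(G)$. So for $|G| = p^5$ and $\cl(G) = 4$ we get $Z(G) \cong C_p$, which is cyclic. As a cross-check, this can also be read off from the presentations of $\phi_9(2111)a$, $\phi_9(2111)b_r$ and $\phi_9(1^5)$ in \cite{J1980}: in each, the center is generated by the last element of the central series $\alpha_4$.

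An alternative route avoids Lemma \ref{lemma:p5 cl3 cdG1,p} altogether. Since $G$ has maximal class and $\cd(G) = \{1, p\}$, \cite[Theorem A]{CL2020 maximal} gives the codegree set directly, and this is exactly how the lemma handled the case $\cl(G) = 4$. If one wants an independent sanity check of the extreme values, two facts suffice. First, $p^4 \in \cod(G)$: the center is cyclic, so $G$ has a faithful irreducible character, which cannot be linear because $G$ is non-abelian, so it has degree $p$ and codegree $p^5/p = p^4$. Second, $p^2, p^3 \in \cod(G)$: the quotients $G/\gamma_3(G)$ and $G/\gamma_4(G)$ are non-abelian of orders $p^3$ and $p^4$, and Theorem \ref{thm:nonabelianlessthanp5} provides characters of these quotients with codegrees $p^2$ and $p^3$.

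The only real obstacle is confirming that all groups in $\Phi_9$ satisfy $\cl(G) = 4$ and $\cd(G) = \{1, p\}$, which is taken from \cite{J1980}. Once that is granted, the corollary is immediate.
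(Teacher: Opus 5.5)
Your proof is correct and follows the paper's route: the paper just observes that $G\in\Phi_9$ has $\cl(G)=4$ and $\cd(G)=\{1,p\}$ and applies Croome--Lewis's Theorem~A on maximal class groups directly, so passing through Lemma~\ref{lemma:p5 cl3 cdG1,p}(2)(b) (itself proved by that citation) and checking $Z(G)\cong C_p$ is harmless but unnecessary. Your ``sanity check'' becomes a complete, elementary proof that avoids the external theorem altogether, provided you make two small additions. First, add the trivial bound $\cod(G)\subseteq\{1,p,\dots,p^4\}$: linear codegrees are at most $\exp(G/G')=p$, and nonlinear ones are at most $p^5/p$. Second, obtain $p^2$ and $p^3$ exactly as you obtained $p^4$, namely from faithful degree-$p$ characters of $G/\gamma_3(G)$ and $G/\gamma_4(G)$, whose centres have order $p$.
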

	\begin{proof}
		If $G\in \Phi_9$, then $\cl(G) = 4$ and $\cd(G) = \{ 1, p \}$ (see \cite{J1980}). Then $\cod(G) = \{ 1, p, p^2, p^3, p^4 \}$ by \cite[Theorem A]{CL2020 maximal}.
	\end{proof}
	
		%
	
	\begin{longtable}[c]{|l|c||l|c|}
		\caption{Character codegrees set of non-VZ $p$-groups of order $p^5$ \label{t:non VZ groups}}\\
		
		\hline
		Group $G$ &  $\cod(G)$ & 	Group $G$  & $\cod(G)$ \\ 
		\hline
		\endfirsthead
		
		\hline
		\multicolumn{4}{|c|}{Continuation of Table \ref{t:non VZ groups}}\\
		\hline
		Group $G$ &  $\cod(G)$ & 	Group $G$  & $\cod(G)$ \\   
		\hline
		
		\endhead
		
		\hline
		\endfoot

		\endlastfoot

		\vtop{\hbox{\strut $\phi_{3}(2111)a$, } \hbox{\strut $\phi_{3}(2111)b_r$ $(r=1, \nu)$, } \hbox{\strut $\phi_{3}(1^5)$, } \hbox{\strut $\phi_{3}(221)a$, } \hbox{\strut $\phi_{3}(221)b_r$ $(r=1, \nu)$, } \hbox{\strut $\phi_{3}(2111)d$, } \hbox{\strut $\phi_{3}(2111)e$}} & $\{ 1, p, p^2, p^3 \}$ & \vtop{\hbox{\strut $\phi_{3}(311)a$, }\hbox{\strut $\phi_{3}(311)b_r$ $(r=1, \nu)$, } \hbox{\strut $\phi_{3}(2111)c$ }}  & $\{ 1, p, p^2, p^4 \}$ \\
		\hline
		\hline
		\vtop{\hbox{\strut $\phi_{4}(221)a$, } \hbox{\strut $\phi_{4}(221)b$, } \hbox{\strut $\phi_{4}(221)d_r$ $(r=1,2,\ldots,(p-1)/2)$, } \hbox{\strut $\phi_{4}(221)e$, } \hbox{\strut $\phi_{4}(2111)b$, } \hbox{\strut $\phi_{4}(2111)c$, } \hbox{\strut $\phi_{4}(2111)e$}} & $\{ 1, p, p^2, p^3 \}$ & \vtop{\hbox{\strut $\phi_{4}(221)c$, }\hbox{\strut $\phi_{4}(2111)a$, } \hbox{\strut $\phi_{4}(1^5)$ }}  & $\{ 1, p, p^2 \}$ \\
		\hline
		\vtop{\hbox{\strut $\phi_{4}(221)f_0$, }\hbox{\strut $\phi_{4}(221)f_r$ $(r=1,2,\ldots,(p-1)/2)$ } }  & $\{ 1, p, p^3 \}$ & - & - \\
		\hline
		\hline
		All Groups in $\Phi_6 \cup \Phi_7 \cup \Phi_8  \cup \Phi_{10}$ & $\{ 1, p, p^2, p^3 \}$ & - & -\\
		\hline
		\hline
		All Groups in $\Phi_9$ & $\{ 1, p, p^2, p^3, p^4 \}$ & - & -\\
		\hline
	\end{longtable}
	
	\subsection{Verification of results through {\sc Magma}}
	We have constructed a {\sc Magma} function {\tt CharacterCodegree} to verify $\cod(G)$ for a given group $G$. With its help, we verify the data recorded in Table \ref{t:p34}, \ref{t:VZ groups phi2 phi5} and \ref{t:non VZ groups} for primes 5, 7, 11 and 13. The code 
	is publicly available in {\sc Magma} 
	via a GitHub repository \cite{githubudeep2026}. 
	
	\subsection{Character codegree sets for groups of order $3^n$ where $n=3,4,5$} We utilize the {\sc Magma} code {\tt CharacterCodegree} to obtain the character codegrees of non-abelian groups of order $3^n$ (for $n=3,4,5$); we present the values in Table \ref{t:3-groups}. The group identifiers are with respect to 
	the {\sc SmallGroups} library \cite{SmallGroups}.
	
	\begin{small}
		
		\begin{longtable}[c]{|l|l|}
			\caption{The character codegrees set for the non-abelian groups of order $3^n$ (for $n=3,4,5$) \label{t:3-groups}}\\
			
			\hline
			$\cod(G)$ & SmallGroup ID\\
			\hline
			\endfirsthead
			
			\hline
			\multicolumn{2}{|c|}{Continuation of Table \ref{t:3-groups}}\\
			\hline
			$\cod(G)$ & SmallGroup ID\\
			\hline
			\endhead
			
			\hline
			\endfoot
			$\{ 1, 3, 9 \}$  &   [27, 3], [27, 4] \\  
			\hline 
			\hline
			$\{ 1, 3, 9 \}$  &   [81, 3], [81, 4], [81, 12], [81, 13] \\  \hline
			$\{ 1, 3, 27 \}$  &   [81, 14] \\  
			\hline 
			$\{ 1, 3, 9, 27 \}$  &   [81, 6], [81, 7], [81, 8], [81, 9], [81, 10] \\  
			\hline \hline
			$\{ 1, 3, 9 \}$  &   [243, 2], [243, 32], [243, 33], [243, 37], [243, 38], [243, 47], [243, 62], [243, 63] \\  \hline
			$\{ 1, 3, 27 \}$  &  [243, 44], [243, 45], [243, 64], [243, 65], [243, 66]  \\  
			\hline 
			$\{ 1, 3, 9, 27 \}$  & \vtop{\hbox{\strut [243, 3], [243, 4], [243, 5], [243, 6], [243, 7], [243, 8], [243, 9], [243, 11] }  \hbox{\strut [243, 12], [243, 13], [243, 14], [243, 15], [243, 17], [243, 18], [243, 21], [243, 22]  } \hbox{\strut [243, 28], [243, 29], [243, 30], [243, 34], [243, 35], [243, 36], [243, 39], [243, 40]  } \hbox{\strut [243, 41], [243, 42], [243, 43], [243, 46], [243, 49], [243, 51], [243, 52], [243, 53]  } \hbox{\strut [243, 54], [243, 56], [243, 57], [243, 58], [243, 59], [243, 60], [243, 52], [243, 53]  } }   \\
			\hline
			$\{ 1, 3, 9, 81 \}$ &  [243, 16], [243, 19], [243, 20], [243, 50], [243, 55]   \\
			\hline
			$\{ 1, 3, 9, 27, 81 \}$  &  [243, 24], [243, 25], [243, 26], [243, 27]  \\
			\hline
		\end{longtable}
	\end{small}
	
	We summarize the results obtained in Section \ref{sec:groups less than p6} in Theorem \ref{thm:codG total p^5}. 
	\begin{theorem} \label{thm:codG total p^5}
		Let $G$ be a non-abelian $p$-group, where $p$ is an odd prime.
		\begin{enumerate}
			\item If $|G| = p^3$, then $\cod(G) = \{ 1, p, p^2 \}$.
			\item If $|G| = p^4$, then $\cod(G) \in \{ \{1, p, p^2\}, \{1, p, p^3\}, \{1, p, p^2, p^3\}  \}$.
			\item If $|G| = p^5$, then $\cod(G) \in \{ \{1, p, p^2\}, \{1, p, p^3\}, \{1, p, p^2, p^3\}, \{1, p, p^2, p^4\}, \{1, p, p^2, p^3, p^4\}  \}$.
		\end{enumerate}
	\end{theorem}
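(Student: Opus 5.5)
This theorem is a summary, so the plan is to assemble results already proved in Section \ref{sec:groups less than p6}, treating $p>3$ and $p=3$ separately.

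\textbf{The case $p>3$.} Part (1) is Theorem \ref{thm:nonabelianlessthanp5}(1), which came from Corollary \ref{cor:extraspecial p-group} since every non-abelian group of order $p^3$ is extraspecial.

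For part (2), Theorem \ref{thm:nonabelianlessthanp5}(2) gives three possibilities. Class 2 with non-cyclic center gives $\{1,p,p^2\}$. Class 2 with cyclic center gives $\{1,p,p^3\}$ or $\{1,p,p^2,p^3\}$. Class 3 gives $\{1,p,p^2,p^3\}$. These exhaust the three listed sets.

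For part (3), I will run through the isoclinism families $\Phi_2,\dots,\Phi_{10}$ and collect the possible sets.
\begin{itemize}
\item $\Phi_2$: use Lemma \ref{lemma:groupsp5 phi2} together with the values of $\exp(G/G')$ in Table \ref{t:VZ groups phi2 phi5}.
\begin{itemize}
\item Center $C_{p^3}$: gives $\{1,p,p^2,p^3,p^4\}$ or $\{1,p,p^2,p^4\}$.
\item Center $C_{p^2}\times C_p$: gives $\{1,p,p^2,p^3\}$ or $\{1,p,p^3\}$.
\item Center $C_p^3$: gives $\{1,p,p^2\}$.
\end{itemize}
\item $\Phi_5$: gives $\{1,p,p^3\}$ by Lemma \ref{lemma:p5 phi5}.
\item $\Phi_3$: gives $\{1,p,p^2,p^3\}$ or $\{1,p,p^2,p^4\}$ by Lemma \ref{lemma:groupsp5 phi3}.
\item $\Phi_4$: gives $\{1,p,p^2\}$, $\{1,p,p^3\}$ or $\{1,p,p^2,p^3\}$ by Lemma \ref{lemma:groupsp5 phi4}.
\item $\Phi_6,\Phi_7,\Phi_8,\Phi_{10}$: give $\{1,p,p^2,p^3\}$ by Corollary \ref{lemma:groupsp5 6to10}.
\item $\Phi_9$: gives $\{1,p,p^2,p^3,p^4\}$ by Corollary \ref{lemma:groupsp5 phi9}.
\end{itemize}
The union of all these outcomes is exactly the five listed sets.

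For $\Phi_2$ the check is that $\exp(G/G')\le p^3$ and the nonlinear codegree lies in $\{p^2,p^3,p^4\}$. The set therefore always has the form $\{1,p,\dots\}$ with no gaps other than those listed. In particular, a set such as $\{1,p,p^3,p^4\}$ never occurs. Ruling it out needs $\exp(G/G')\ge p^2$ whenever $Z(G)$ is cyclic of order $p^3$. This holds because $G/Z(G)\cong C_p^2$ and $G'\cong C_p$, so $G/G'$ contains $Z(G)/G'\cong C_{p^2}$.

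\textbf{The case $p=3$.} The structure of groups of order $3^4$ and $3^5$ differs, so the general arguments above do not transfer. Instead I read off Table \ref{t:3-groups}, computed in {\sc Magma}.
\begin{itemize}
\item Order $27$: only $\{1,3,9\}$ occurs.
\item Order $81$: $\{1,3,9\}$, $\{1,3,27\}$ and $\{1,3,9,27\}$ occur.
\item Order $243$: the sets occurring are among the five listed.
\end{itemize}
Each observed set is the $p=3$ instance of a listed set, so the statement also holds for $p=3$.

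The main obstacle is not conceptual but one of bookkeeping. The $p=3$ case rests on machine computation rather than proof. For $p>3$, the $\Phi_2$ subcases require confirming, group by group, the $\exp(G/G')$ values and the Case I/Case II split of Theorem \ref{thm: VZ p-group}(iv). That is where an error would most likely hide.
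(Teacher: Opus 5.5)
Your proposal is correct and follows the paper's own route. For $p>3$ you take the union of the outcomes of Theorem~\ref{thm:nonabelianlessthanp5}, Lemmas~\ref{lemma:groupsp5 phi2}--\ref{lemma:groupsp5 phi4} and Corollaries~\ref{lemma:groupsp5 6to10} and~\ref{lemma:groupsp5 phi9}, read together with Tables~\ref{t:VZ groups phi2 phi5} and~\ref{t:non VZ groups}, and for $p=3$ you read off Table~\ref{t:3-groups}; your extra remark that $Z(G)/G'\cong C_{p^2}$ embeds in $G/G'$ is a clean table-free way to rule out $\{1,p,p^3,p^4\}$ in $\Phi_2$.
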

	
	\section*{Acknowledgements}
	The author thanks Sunil Kumar Prajapati for carefully reading the many versions of this paper and elaborate discussions on the content.
	The author also acknowledges the University of Auckland
	for providing remote access to their computational facilities.

\end{document}